\newenvironment{remark}{{\bf Remark}}
\def\D{{\cal D}}
\def\B{{\cal B}}
\def\L{{\cal L}}
\def\O{{\cal O}}
\def\M{{\cal M}}
\def\S{{\cal S}}
\def\K{{\cal K}}
\def\R{{\cal R}}
\def\P{{\cal P}}
\def\Ai{{\text{Ai}}}
\def\qqand{\qquad\hbox{and}\qquad}
\def\dkf{{\mathrm d}}
\def\dx{\dkf x}
\def\half{{\frac{1}{2}}}
\title{A fast and well-conditioned spectral method}
\author{Sheehan Olver\thanks{School of Mathematics and Statistics, The
University of Sydney, Sydney, Australia. (Sheehan.Olver@sydney.edu.au)}
        \and Alex Townsend\thanks{Mathematical Institute,
24-29 St Giles', Oxford, England, OX1 3LB. (townsend@maths.ox.ac.uk)}}
\begin{document}

\maketitle

\begin{abstract}
A spectral method is developed for the direct solution of linear
ordinary differential equations with variable coefficients. The method
leads to matrices which are almost banded, and a numerical solver is
presented that takes
$\O(m^2n)$ operations, where $m$ is the number of Chebyshev points
needed to resolve the coefficients of the differential operator and 
$n$ is the number of Chebyshev coefficients needed to resolve the
solution to the differential equation.  We prove stability of the method
by relating it to a diagonally preconditioned system which has a bounded
condition number, in a suitable norm.  For Dirichlet boundary conditions,
this implies stability in the standard $2$-norm.  An adaptive QR factorization is developed to efficiently solve the resulting linear system and automatically choose the optimal number of Chebyshev coefficients needed to represent the solution.  The resulting algorithm can efficiently and reliably solve for solutions that require as many as a million unknowns.  
\end{abstract}

\begin{keywords}
spectral method, ultraspherical polynomials, adaptive direct solver
\end{keywords}

\begin{AMS}
65N35, 65L10, 33C45
\end{AMS}

\pagestyle{myheadings}
\thispagestyle{plain}
\markboth{}{}

\section{Introduction}
Spectral methods are an important tool in scientific computing and engineering
 for solving differential equations  (see, for instance,
\cite{Canuto_06_01,Fornberg_98_01,Gottlieb_98_01,Shen_09_01,Trefethen_00_01}). Although the
computed solutions can converge super-algebraically to the solution of the
differential equation, conventional
wisdom states that spectral methods lead to dense, ill-conditioned matrices. In
this paper, we introduce a spectral method which continues to
converge super-algebraically to the solution, but only requires solving an
almost banded, well-conditioned linear system. 

Throughout,  we consider the family of linear
differential equations on $[-1,1]$:
	\begin{equation}\label{ODE}
		\L u =  f \qqand \B u = {\mathbf c}
	\end{equation}
where $\L$ is an $N$th order linear differential operator 
	$$\L u = a^N(x) \frac{\dkf^N u}{ \dx^N} + \cdots + a^1(x) \frac{\dkf u}
{\dx} + a^0(x) u,$$
 $\B$ denotes $K$ boundary conditions (Dirichlet, Neumann, etc.), ${\mathbf c}
\in {\mathbb C}^K$ and
$a^0,\ldots,a^N$, $f$ are suitably smooth functions on $[-1,1]$.  We make
the
assumption that the differential equation is not singular; i.e., the leading variable coefficient $a^N(x)$ does not vanish on the interval $[-1,1]$.

	Within spectral methods there is a
subdivision between
collocation methods and coefficient methods; the former construct matrices 
operating on the values of a function at, say, Chebyshev points; the latter
construct matrices operating on coefficients in a basis, say, of Chebyshev
polynomials.
Here there is a common belief that collocation methods are more adaptable to
differential equations with variable coefficients \cite[section 9.2]{Boyd_01_01}; i.e., when
$a^0(x),\ldots,a^N(x)$ are not constant. 
 However, the spectral coefficient method that we construct
is equally applicable to differential equations with variable coefficients.

	
		

For example, the first order differential equation (chosen so the
entries in the resulting linear system are integers)
\begin{equation}\label{firstorder}
	{\frac{\dkf u}{\dx}}	+ 4xu = 0 \qqand u(-1) = c,
\end{equation}
results in our spectral method forming the {\em almost banded} $n\times n$ linear
system

\begin{equation}
\left(
\begin{array}{c c c c c c c c}
1 & -1 & 1 & -1 & 1 & -1 &\cdots  & (-1)^{n-1}\\[4pt]
  & 2  &   & -1 &   &    &        &           \\[4pt]
2 &    & 2 &    &  -1 &    &        &           \\[4pt]
  & 1  &   &  3 &   & -1 &        &           \\[4pt]		
  &    &\smash{\ddots}&  &\smash{\ddots}&& \smash{\ddots}&\\[4pt]
  &    &   & 1  &   &n-3 &        &         -1\\[4pt]
  &    &   &    & 1 &    & n - 2  &           \\[4pt]
  &    &   &    &   &  1 &        &  n - 1    \\
\end{array} \right)
\left(\begin{array}{c}u_0\\[4pt]
u_1\\[4pt]u_2\\[4pt]\\[4pt]\smash{\vdots}\\[4pt]
\smash{\vdots}\\[4pt]\\[4pt]u_{n-1}\end{array}\right)
=
\left(\begin{array}{c}c\\[4pt]0\\[4pt]0\\[4pt]\\[4pt]\smash{\vdots}\\[4pt]\smash
{\vdots}\\[ 4pt ] \\[4pt]0\\ \end{array}\right).
\label{firstsystem}
\end{equation}
We then approximate the solution to (\ref{firstorder}) as 
\[
u(x) = \sum_{k=0}^{n-1} u_k T_k(x)
\]
where $T_k$ is the Chebyshev polynomial of degree $k$ (of the first kind).
Moreover, the stability of solving (\ref{firstsystem}) is directly related to that of a diagonally preconditioned matrix system which has
$2$-norm condition number
bounded above by $53.6$ for all $n$.


	Our 
	method is based on: 
\begin{enumerate}
\item Representing derivatives in terms of {\it  ultraspherical polynomials}. This results in diagonal differentiation matrices.
\item Representing conversion from Chebyshev polynomials to ultraspherical polynomials by a {\it banded} operator.
\item Representing multiplication for variable coefficients by banded operators in coefficient space.  This is achieved by approximating the variable coefficients by a truncated Chebyshev (and thence ultraspherical) series.  
\item Imposing the boundary conditions using {\it boundary bordering}, that is, $K$ rows
of the linear system are used to impose $K$ boundary conditions.  For
(\ref{firstorder}), the
last row of the linear system has been replaced and then permuted to the first
row.  This allows for a convenient solution basis to be used for general boundary conditions.
\item Using an {\it adaptive QR decomposition} to determine the optimal truncation denoted by $n_{\rm opt}$.  We develop a sparse representation for the resulting dense matrix, allowing for efficient back substitution.
\end{enumerate} 
The resulting stable method requires only $\O(n_{\rm opt})$ operations and calculates the Chebyshev coefficients of the solution (as well as its derivatives) to machine precision.  A striking application of the method is to singularly perturbed differential equations, where the bandwidth of the linear system is uniformly bounded, and the accuracy is unaffected by the extremely large condition numbers.

This paper is organized as follows.   We first provide an overview of existing  spectral methods.  In section~\ref{sec:firstorder}, we construct the method for first order differential equations and apply it to two
problems with highly oscillatory variable coefficients. In the third section,
we extend the approach to higher order differential equations
by using ultraspherical polynomials, and present numerical results of the method applied to 
challenging second and higher-order differential equations. In the fourth
section, we prove
stability and convergence of the method in high order norms, which reduce to the 
standard $2$-norm for Dirichlet boundary
conditions. In section~\ref{sec:fastsolver}, we present a fast, stable algorithm to solve the almost banded linear systems in
$\O(n_{\rm opt})$ operations, where $n_{\rm opt}$ is automatically determined, allowing for the efficient solution of differential equations that require as many as a million unknowns (see Figure~\ref{fig:adaptiveqrtiming}).  In the final section we describe directions for future research.

\begin{remark}
	The {\sc Matlab} and {\sc C++} code used for the numerical results is available from \cite{Code}.  
\end{remark}

\subsection{Existing techniques}
Chebyshev polynomials or, more generally, Jacobi polynomials have been
abundantly used to construct spectral coefficient methods. In this section we briefly survey some existing techniques.

\subsubsection{Tau-method}
The tau-method was originally proposed by Lanczos \cite{Lanczos_38_01} and is analogous to representing differentiation as an operator on Chebyshev coefficients \cite{Boyd_01_01}.  The resulting linear system, which is constructed by using recurrence relationships between Chebyshev polynomials, is always dense --- even for differential equations with constant variable coefficients --- and is typically ill-conditioned.  The boundary conditions are imposed by replacing the last few rows of the linear system with entries constraining the solution's coefficients. 

This method was made popular and extended by Ortiz and his colleagues \cite{Gottlieb_98_01,Ortiz_69_01} and can be made into an automated black-box solver, but this is more due to how the boundary conditions are imposed rather than the specifics of this approach.  Therefore, we will use the same technique called {\em boundary bordering} for the boundary conditions.


\subsubsection{Basis recombination}

An alternative to boundary bordering is to impose the boundary conditions by \emph{basis
recombination}.  That is, for our simple example (\ref{firstorder}), by computing the
coefficients in the expansion  
\[
u(x) = cT_0(x) + \sum_{k=1}^\infty \tilde{u}_k \phi_k(x)
\]
where
\[
\phi_k(x) = 
\begin{cases}
T_k(x) - T_0(x) & k \text{ even}\\
T_k(x) + T_0(x) & k \text{ odd}.\\
\end{cases}
\]
The basis is chosen so that $\phi_k(-1)=0$, and there are many other possible
choices.

Basis recombination runs counter to an important observation of
Orszag \cite{Orszag_71_01}: Spectral methods in a convenient basis (like
Chebyshev) can outperform an awkward problem-dependent basis. The
problem-dependent basis may be theoretically elegant, but convenience is worth
more in practice.  
In detail, the benefit of using boundary bordering instead of basis recombination include: (1) the solution is always computed in the convenient
and orthogonal Chebyshev basis; (2) a fixed basis means we can automatically
apply recurrence
relations between Chebyshev polynomials (and later, {ultraspherical
polynomials}) to construct multiplication matrices for incorporating variable coefficients; (3) the structure of the linear systems does
not depend on the boundary condition(s), allowing for a fast, general solver; and (4) while basis recombination can result in well-conditioned linear systems, the solution can be in terms of an unstable basis for very high order boundary conditions.  

There is one negative consequence of boundary bordering: it results in non-symmetric matrices --- even for self-adjoint
differential operators --- potentially doubling the computational cost of solving the resulting linear system when using direct methods.  
However,  our focus is on general differential equations, which are not necessarily self-adjoint.  

\subsubsection{Petrov--Galerkin methods}
Petrov--Galerkin methods solves linear ODEs using different bases for representing the solution (trial basis) and the right-hand side  of the equation (test basis). 
 The boundary conditions determine the trial basis \cite{Ma_00_01}, and the boundary conditions associated to the dual differential equation can determine the test basis \cite{Shen_95_01}.   Shen has constructed bases for second, third, fourth and higher
odd order differential equations with constant coefficients so that the
resulting matrices are banded or
easily invertible \cite{Doha_09_01,Shen_95_01,Shen_04_01,Shen_07_01}. Moreover, it was shown that very
specific variable coefficients preserve this matrix structure
\cite{Shen_09_01}.  These methods can be very efficient, but the method for imposing the boundary conditions is difficult to automate. 

Our method will also use two different bases: the Chebyshev and ultraspherical
polynomial bases.  This choice preserves sparsity as do those considered in
\cite{Doha_09_01,Ma_00_01,Shen_95_01,Shen_07_01}, but our bases will depend on the order of the differential equation and not the boundary conditions. 


\subsubsection{Integral reformulation}

Integral reformulation expresses the solution's $N$th derivative in a Chebyshev expansion, solves for those coefficients before integrating back to obtain the coefficients of the solution \cite{Coutsias_96_01,Greengard_91_01,Zebib_83_01}.
 The same idea was previously advocated by Clenshaw \cite{ClenshawTauMethod} and a comparison was made to the tau-method by Fox \cite{Fox_62_01} who concluded that Clenshaw's method was more accurate.   In the case of constant coefficients, an alternative approach is to integrate   the differential equation itself, which results in a banded system \cite{Viswanath_12_01}.    Recently, integration reformulation has received considerable
attention in the literature because it can construct well-conditioned linear systems \cite{Driscoll_10_01,Lee_97_01,Viswanath_12_01}.


	Clenshaw's method was a motivating example for (F. W. J.) Olver's algorithm \cite{OlversAlgorithm,Lozier_80}: a method for choosing the optimal truncation parameter $n_{\rm opt}$ for calculating solutions to recurrence relationships (i.e., banded linear systems).  The original paper considered dense boundary rows to handle the boundary conditions for Clenshaw's method.      In section~\ref{sec:opttrunc}  we develop the adaptive QR decomposition which is a generalization of Olver's algorithm.  The original approach was based on adaptive Gaussian elimination without pivoting, with a proof of numerical stability for sufficiently large $n$, but, by replacing LU with QR, we achieve stability for all $n$.

		Many of the ideas that we develop in this paper are equally applicable to integral reformulation, as the resulting matrices are also almost banded.  However, imposing boundary conditions requires additional variables related to the integration constants, considerably complicating the representation of  multiplication.  Moreover, integral reformulation does not maintain sparsity for {\it mixed operators} such as
	$${\dkf \over \dx} a(x) {\dkf  u\over \dx},$$
as the integration will no longer  annihilate the outer differentiation, which corresponds to a dense operator.  Our approach avoids such issues.

\subsubsection{Preconditioned iterative methods}

Another approach to obtain well-conditioned linear systems is preconditioning, with the most common being motivated by a finite difference
stencil \cite{Orszag_80_01}, the operator representing the ODE with all its variable coefficients set to constants \cite{Bender_78_01,Shen_09_01}, a finite element approximation
\cite{Canuto_85_01,Deville_85_01} or integration preconditioners
\cite{Hesthaven_98_01,Elbarbary_06_01}.
Once preconditioned, iterative solvers can be
employed along with the Fast Fourier Transformation, which can be more efficient than dense solvers in certain situations \cite{Boyd_01_01,Canuto_06_01,Shen_09_01}.   While specially designed iterative methods can outperform direct methods for specific problems, they do not provide the generality and robustness that we require.  


Differential equations with highly oscillatory or nonanalytic variable coefficients are one example where  iterative methods may be necessary to achieve computational efficiency, as the multiplication operators have very large bandwidth in coefficient space, resulting in essentially dense linear
systems.  
Iterative methods based on
matrix-vector products avoid this growth in complexitiy, hence can be more efficient.
 However, the matrix-vector product has to be user-supplied code which is unlikely to be optimized in terms of memory caching or memory allocation; therefore,  significant benefits over direct methods will only appear for problems that require large $n$  \cite{Weideman_00_01}.   Moreover, direct methods provide better stability and robustness \cite{Greengard_91_01}.  Finally, while the computational efficiency of our approach is lost for large bandwidth variable coefficients, numerical stability is maintained,  as demonstrated in the second example of section \ref{sec:firstorderexamples}.





\section{Chebyshev polynomials and first order differential equations}\label{sec:firstorder}
For pedagogical reasons, we begin by solving first-order differential
equations of the form 
\begin{equation}
u'(x) + a(x)u(x) = f(x) \qqand u(-1)=c
\label{eq:firstDE}
\end{equation}
where $a : [-1,1]\rightarrow \mathbb{C}$ and $f:[-1,1]\rightarrow\mathbb{C}$
are continuous functions with bounded variation. The continuity assumption
ensures that (\ref{eq:firstDE}) has a unique continuously
differentiable solution on the unit interval \cite{Ritger_00_01}, while bounded variation ensures a unique
representation as a uniformly convergent Chebyshev expansion
\cite[Thm. 5.7]{Mason_03_01}.  An exact representation of a continuous function $g(x)$ with bounded variation is 
\begin{equation}\label{eq:aChebSeries}
g(x)  = \sum_{k=0}^\infty g_kT_k(x), \hbox{  }
 g_0 = \frac{2}{\pi}\int_{-1}^1\frac{g(x)}{\sqrt{1-x^2}} \dx, \hbox{  } g_k = \frac{1}{\pi}\int_{-1}^1\frac{g(x) T_k(x)}{\sqrt{1-x^2}} \dx 
\end{equation} 
where  $T_k(x) = \cos\left(k\cos^{-1}(x)\right)$. 
One way to approximate $g(x)$ is to truncate (\ref{eq:aChebSeries}) after the
first $n$ terms, to obtain the polynomial (of degree at most $n-1$)
\begin{equation}
g_{\text{trunc}}(x) = \sum_{k=0}^{n-1} g_kT_k(x).
\label{eq:truncate}
\end{equation}
The $n$ coefficients $\left\{g_k\right\}$ can be obtained by numerical quadrature in $\mathcal{O}(n^2)$ operations.  A second approach is to interpolate
$g(x)$ at $n$ Chebyshev points --- i.e., $\cos(k\pi/(n-1))$ for $k=0,1,\ldots,n-1$ --- to obtain
the polynomial
\begin{equation}
g_{\text{interp}}(x) = \sum_{k=0}^{n-1} \tilde{g}_kT_k(x).
\label{eq:interpolant}
\end{equation}
In this case, the $n$ coefficients $\left\{\tilde{g}_k\right\}$ can be computed with a Fast Cosine Transform in just $\mathcal{O}(n \log n)$ operations.
The coefficients $\left\{\tilde{g}_k\right\}$ and
$\left\{g_k\right\}$ are closely related by an aliasing formula
stated by Clenshaw and Curtis \cite{ClenshawCurtis}.  

Usually, in practice, $g(x)$ will be many times
differentiable on $[-1,1]$, and then (\ref{eq:truncate}) and
(\ref{eq:interpolant}) converge uniformly to $g(x)$ at an algebraic rate as 
$n\rightarrow\infty$.  Moreover, the convergence is spectral (super-algebraic)
when $g$ is infinitely differentiable, which improves to be exponential when $g$
is
analytic in a neighbourhood of $[-1,1]$.  If $g$ is entire, the convergence rate improves
further to be super-exponential.  

Mathematically, we seek the Chebyshev coefficients of the
truncation (\ref{eq:truncate}), of $a(x)$ and $f(x)$ which are defined by
the integrals (\ref{eq:aChebSeries}). However, we use the
coefficients in the polynomial interpolant (\ref{eq:interpolant}) as an
approximation, since the coefficients match to machine precision for sufficiently large $n$.   
The degree of the polynomial used to approximate $a(x)$ will later
be closely related to the bandwidth of the almost banded linear system associated to (\ref{eq:firstDE}).

The spectral method in this paper solves for the coefficients of the solution in
a Chebyshev series. In order to achieve this, we need to be
able to represent differentiation, $u'(x)$, and
multiplication, $a(x)u(x)$, in terms of operators on coefficients.
\subsection{First order differentiation operator}
The derivatives of Chebyshev polynomials satisfy 
\begin{equation}
\frac{dT_k}{\dx} = 
\begin{cases}
kC^{(1)}_{k-1}& k\geq 1\\
0&k=0\\
\end{cases}
\label{eq:diffrelation}
\end{equation}
where $C^{(1)}_{k-1}(x)$
is the Chebyshev polynomial of the second kind of degree $k-1$. We use
$C^{(1)}$ instead of $U$ to highlight the connection to ultraspherical
polynomials, which are key to extending the method to higher order differential
equations (see section \ref{sec:higherorder}).

Now, we use
(\ref{eq:diffrelation}) to derive a simple expression for the derivative of
a Chebyshev series.  Suppose that $u(x)$ is given by the
Chebyshev series 
\begin{equation}
u(x) = \sum_{k=0}^\infty u_kT_k(x).
\label{eq:uChebSeries}
\end{equation}
Differentiating scales the coefficients and changes the basis:
\[
u'(x) = \sum_{k=1}^\infty ku_kC^{(1)}_{k-1}(x).
\]
In other words, the vector of coefficients of the derivative in a $C^{(1)}$ series is given by
$\D_0\textbf{u}$
where $\D_0$ is the differentiation operator
\[
\D_0 = \begin{pmatrix}
					0 & 1 &  \cr
					&  & 2 &  \cr
					&&  & 3  \cr			
	
					&&& & \ddots  
			\end{pmatrix}
\]
and $\textbf{u}$ is the (infinite) vector of Chebyshev coefficients for $u(x)$.  Note that
this differentiation operator is sparse, in stark
contrast to the classic differentiation operator in spectral collocation methods
\cite{Boyd_01_01}.

\subsection{Multiplication operator for Chebyshev series}
In order to handle variable coefficients of the form $a(x)u(x)$ in
(\ref{eq:firstDE}), we need
to represent the 
multiplication of two Chebyshev series as an operator on
coefficients. To this end, we express $a(x)$ and $u(x)$ in terms of their Chebyshev series
\[
a(x) = \sum_{j=0}^\infty a_jT_j(x)\qqand u(x) = \sum_{k=0}^\infty u_k T_k(x)
\]
and multiply these two expressions together to obtain
\[
a(x)u(x) = \sum_{j= 0}^\infty\sum_{k=0}^\infty a_ju_kT_j(x)T_k(x) =
\sum_{k=0}^\infty
c_kT_k(x),
\]
desiring an explicit form for $\mathbf{c} = \left(c_0,c_1,\ldots,\right)^\top$.  By Proposition
$2.1$ of \cite{Baszenski_97_01} we have that
\begin{equation}
c_k = 
\begin{cases}
a_0u_0 + \frac{1}{2}\sum_{l=1}^\infty a_lu_l & k=0\vspace{4pt}\\
\frac{1}{2}\sum_{l=0}^{k-1}a_{k-l}u_l + a_0u_k + \frac{1}{2}\sum_{l=1}^\infty
a_lu_{l+k} + \frac{1}{2}\sum_{l=0}^{\infty}a_{l+k}u_l& k\geq1,\\
\end{cases}
\label{eq:Multrelation}
\end{equation}
and in terms of operators $\textbf{c}=\mathcal{M}_0[{a}]\textbf{u}$ where $\mathcal{M}_0[{a}]$ is a Toeplitz plus an almost Hankel operator given by
\[
\mathcal{M}_0[{a}] =
\frac{1}{2}
\left[
\begin{pmatrix}
2a_0& a_1 & a_2 & a_3 &\ldots\\
a_1 & 2a_0& a_1 & a_2 &\ddots\\
a_2 &a_1 & 2a_0& a_1 &\ddots\\
a_3 &a_2 &a_1 & 2a_0&\ddots\\
\vdots & \ddots & \ddots &\ddots&\ddots\\
\end{pmatrix}
+ 
\begin{pmatrix}
0& 0 & 0 & 0 &\ldots\\
a_1 & a_2& a_3 & a_4 &\iddots\\
a_2 &a_3 & a_4& a_5 &\iddots\\
a_3 &a_4 &a_5 & a_6&\iddots\\
\vdots & \iddots & \iddots &\iddots&\iddots\\
\end{pmatrix}
\right].
\]
At first glance, it appears that the multiplication operator and any truncation
of it are dense.  However,
since $a(x)$ is continuous with bounded variation, we are able to
uniformly approximate $a(x)$
with a finite number of Chebyshev coefficients to any desired accuracy. That
is, for any $\epsilon>0$ there exists an $m\in\mathbb{N}$ such that 
\[
\left\|a(x)-\sum_{k=0}^{m-1} a_k T_k(x)\right\|_{L_\infty([-1,1])}<\epsilon.
\]
As long as $m$ is large enough, to all practical purposes, we can use the
truncated Chebyshev series to replace $a(x)$. Recall, in our
implementation we approximate this truncation by the
polynomial interpolant of the form (\ref{eq:interpolant}). Hence,
the $n\times n$ principal part of $\M_0[{a}]$ is
banded with bandwidth $m$ for $n>m$. Moreover, $m$ can be surprisingly small when $a(x)$ is analytic or many
times differentiable.  

There is still one ingredient absent: The operator
$\mathcal{D}_0$ returns coefficients in a $C^{(1)}$ series,
whereas the operator $\mathcal{M}_0[{a}]$ returns coefficients in
a Chebyshev series; hence, $\mathcal{D}_0 + \mathcal{M}_0[{a}]$ is meaningless.  In order to correct this, we
require an operator that maps coefficients in a Chebyshev series to those in
a $C^{(1)}$ series. 

\subsection{Conversion operator for Chebyshev series}
The Chebyshev polynomials $T_k(x)$ can be written in
terms of the $C^{(1)}$ polynomials by the recurrence relation
\begin{equation}
T_k = 
\begin{cases}
\frac{1}{2}\left(C^{(1)}_k - C^{(1)}_{k-2}\right)& k\geq2 \vspace{2pt}\\
\frac{1}{2}C^{(1)}_1 & k=1\vspace{2pt}\\
C^{(1)}_0 & k=0.
\end{cases}
\label{eq:transformation}
\end{equation}
A more general form of (\ref{eq:transformation}), which we will use later, is
given in $\cite{NISTHandbook}$. Suppose that $u(x)$ is given by the
Chebyshev series (\ref{eq:uChebSeries}). Then, using (\ref{eq:transformation}) we
have
\[
\begin{aligned}
u(x) = \sum_{k=0}^\infty u_kT_k(x) &= u_0C^{(1)}_0(x) +
\frac{1}{2}u_1C^{(1)}_1(x) + \frac{1}{2}\sum_{k=2}^\infty u_k\left(C^{(1)}_k(x)
-
C^{(1)}_{k-2}(x)\right)\\
& = \left(u_0-\frac{1}{2}u_2\right)C^{(1)}_0(x) +
\sum_{k=1}^\infty\frac{1}{2}\left(u_{k}-u_{k+2}\right)C_k^{(1)}(x).
\end{aligned}
\]
Hence, the $C^{(1)}$ coefficients for $u(x)$ are $\S_0\textbf{u}$
where $\S_0$ is the conversion operator 
\[
\S_0 = \begin{pmatrix}
					1 &  & -\half \cr
					& \half &  & -\half \cr
					&& \half &  & -\half \cr		
		
					&&&\ddots &  & \ddots
			\end{pmatrix}
\]
and $\textbf{u}$ is the vector of Chebyshev coefficient for $u(x)$.
Note that this conversion operator, and any truncation of it, is sparse and
banded.
\subsection{Discretization of the system}
Now, we have all the ingredients to solve any differential equation of
the form (\ref{eq:firstDE}).  Firstly, we can represent the differential
operator as 
\[
\L : = \D_0 + \S_0\M_0[a],
\]
which takes coefficients in a Chebyshev series to those in a
$C^{(1)}$ series. Due to this fact, the right-hand side
$f(x)$ must be expressed in terms of its coefficients in a $C^{(1)}$ series.
Hence,  we can represent the differential
equation (\ref{eq:firstDE}), without its boundary conditions, as
\[
\L\mathbf{u} = \S_0\mathbf{f}.
\]
where $\mathbf{u}$ and $\mathbf{f}$ denote the vectors of
coefficients in the
Chebyshev series of the form (\ref{eq:aChebSeries}) for $u(x)$ and $f(x)$,
respectively. 

We truncate the operators to derive a practical numerical scheme, which corresponds to applying the 
$n \times \infty$ projection operator given by 
\begin{equation}
\P_n = (I_n, {\mathbf 0}),
\label{eq:projection}
\end{equation} 
where $I_n$ is the square identity matrix of size $n$. 
Truncating the
differentiation operator to
$\P_n\D_0\P_n^\top$ results in an $n\times n$ matrix with a zero last row.
This observation motivates us to impose the boundary condition by replacing
the last row of $\P_n\L \P_n^\top$.  We
take the convention of permuting this boundary row to the first row so that
 the linear system is close to upper triangular.  That is, in order to
obtain an
approximate solution to (\ref{eq:firstDE}) we solve the system
\begin{equation}
A_n
\left( 
\begin{array}{c}
u_0\\
u_1\\
\vdots\\
u_{n-1}
\end{array}
\right)
=
\left(
\begin{array}{c}
c\vspace{15pt}\\
\left(\P_{n-1}\S_0\P_n^\top\right)\left( \P_n\mathbf{f}\right)\\
\vspace{4pt}
\end{array}
\right),
\label{eq:system}
\end{equation}
with
	$$A_n = \left(
 \begin{array}{c c c c}
T_0(-1) & T_1(-1) & \ldots & T_{n-1}(-1)\\
&&&\\
 \multicolumn{4}{c}{\P_{n-1} \L \P_n^\top}  \\
&&&\\
 \end{array}
\right).$$
(Note that $T_k(-1) = (-1)^{k}$.)  The solution $u(x)$ is
then approximated by the computed solution,
\[
\tilde{u}(x)=\sum_{k=0}^{n-1} u_kT_k(x).
\]

\begin{remark} \hbox{ }
Traditionally, one   generates the spectral system by
discretizing each operator individually; i.e., we would make the approximation
$$\P_{n-1}\L\P_n^\top \approx\P_{n-1}\D_{0}\P_{n}^\top +
\left(\P_{n-1}\S_0\P_n^\top\right) \left(\P_n\M[a]\P_n^\top\right).$$
However, we use can generate $\P_{n-1}\L\P_n^\top$ precisely, via
	$$\P_{n-1}\L\P_n^\top  = \P_{n-1}\D_{n}\P_{n}^\top +
\left(\P_{n-1}\S_0\P_{n+m}^\top\right) \left(\P_{n+m}\M[a]\P_n^\top\right).$$
This means that each row of the operator can be generated exactly and allows us to develop the adaptive QR algorithm in section \ref{sec:fastsolver}.  Moreover, exact truncations are of critical importance for infinite dimensional linear algebra and for approximating spectra and pseudospectra \cite{Hansen_08}.  
\end{remark}
 
\subsection{Numerical examples}\label{sec:firstorderexamples}
The traditional approach solves the linear system (\ref{eq:system}) for
progressively larger $n$, and terminates the process when the tail of the
solution falls below relative magnitude of machine epsilon.  This is the procedure employed in Chebfun \cite{Chebfun,Driscoll_08_01} and results in $\O(n_{\rm opt} \log n_{\rm opt})$ complexity.  Instead the adaptive QR algorithm of section \ref{sec:fastsolver} can be used to find the optimal truncation and solve for the solution, concurrently which reduces the complexity to just $\O(n_{\rm opt})$.  

For the first example, we consider the linear differential equation 
\begin{equation}
u'(x) + x^3u(x) = 100\sin\left(20,\!000x^2\right)\qquad u(-1)=0
\label{eq:firstexample}
\end{equation}
which has a highly oscillatory forcing term. The exact solution is
\[
u(x) = e^{-\frac{x^4}{4}}\left(\int_{-1}^x
100e^{\frac{t^4}{4}}\sin\left(20,\!000t^2\right)dt\right).
\]
\begin{figure}
  \centering
    \includegraphics[width=\textwidth]{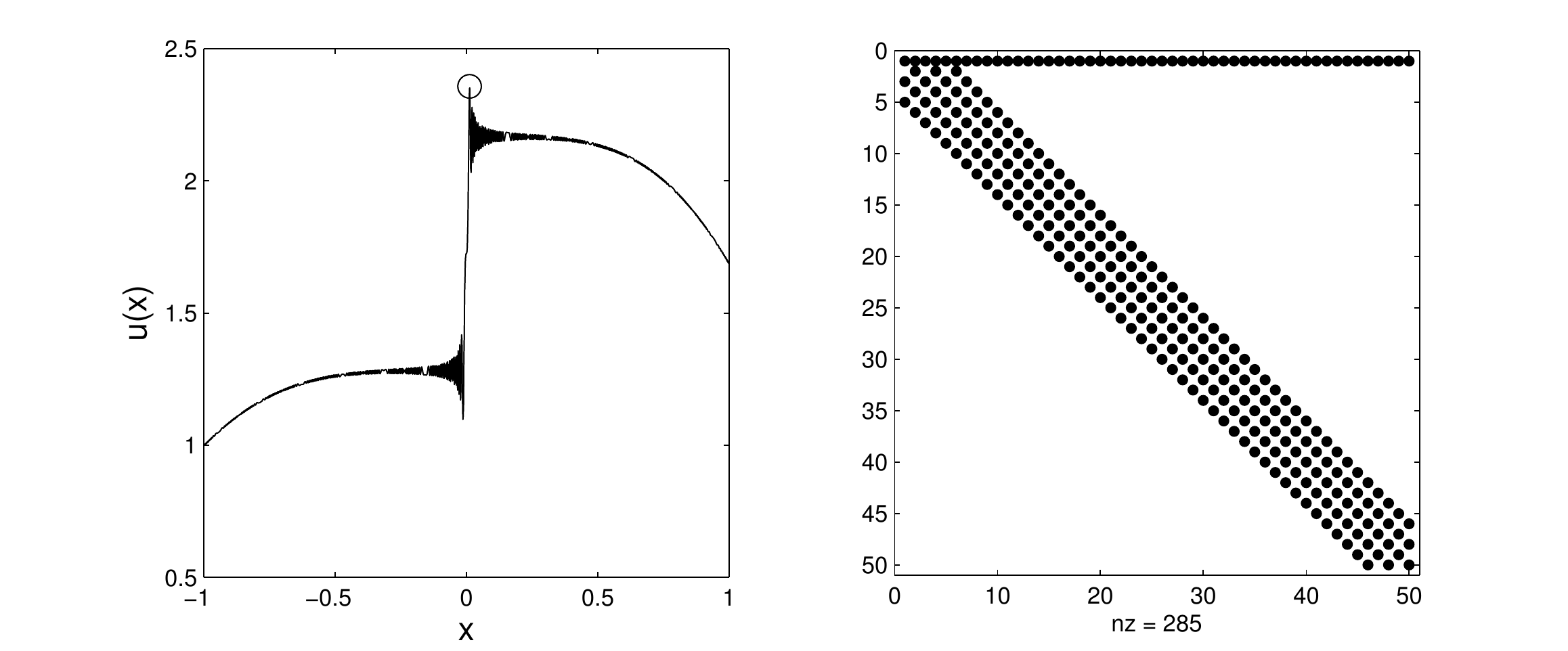}
\caption{Left: The highly oscillatory solution to
(\ref{eq:firstexample}) with the maximum of the solution computed and
marked by a circle. Right: Sparsity structure of the almost banded linear system, with the number of non-zero entries denoted by {\em nz}.}
\label{fig:largeRHS}
\end{figure}

The computed solution is a polynomial of degree $20,\!391$ and 
uniformly approximates the exact solution to essentially machine precision. The
computed solution is of very high degree because at least $2$
coefficients are required per oscillation --- the {\em Nyquist rate}. 
In Figure \ref{fig:largeRHS} we plot the computed oscillatory solution, and a
realization of the matrix when $n=50$, which shows the
almost banded structure of the linear system.

The approximate solution is expressed in terms of a Chebyshev basis, which is
convenient for further manipulation. For example, its maximum
is $2.3573$ (circled in Figure \ref{fig:largeRHS}), its integral is $3.2879$
and the equation $u(x)-1.3=0$ has $113$ solutions in $x\in[-1,1]$.

As a second example we consider the linear differential equation
\begin{equation}
u'(x) + \frac{1}{ax^2+1}u(x) = 0 \qqand u(-1)=1.
\label{eq:secondexample}
\end{equation}
We take $a = 5\times 10^4$, in which case the variable coefficient
can be approximated to roughly machine
precision by a polynomial of degree $7,\!350$, and hence, only for very large
$n$ is the linear system (\ref{eq:system}) banded.
\begin{figure}
  \centering
    \includegraphics[width=\textwidth]{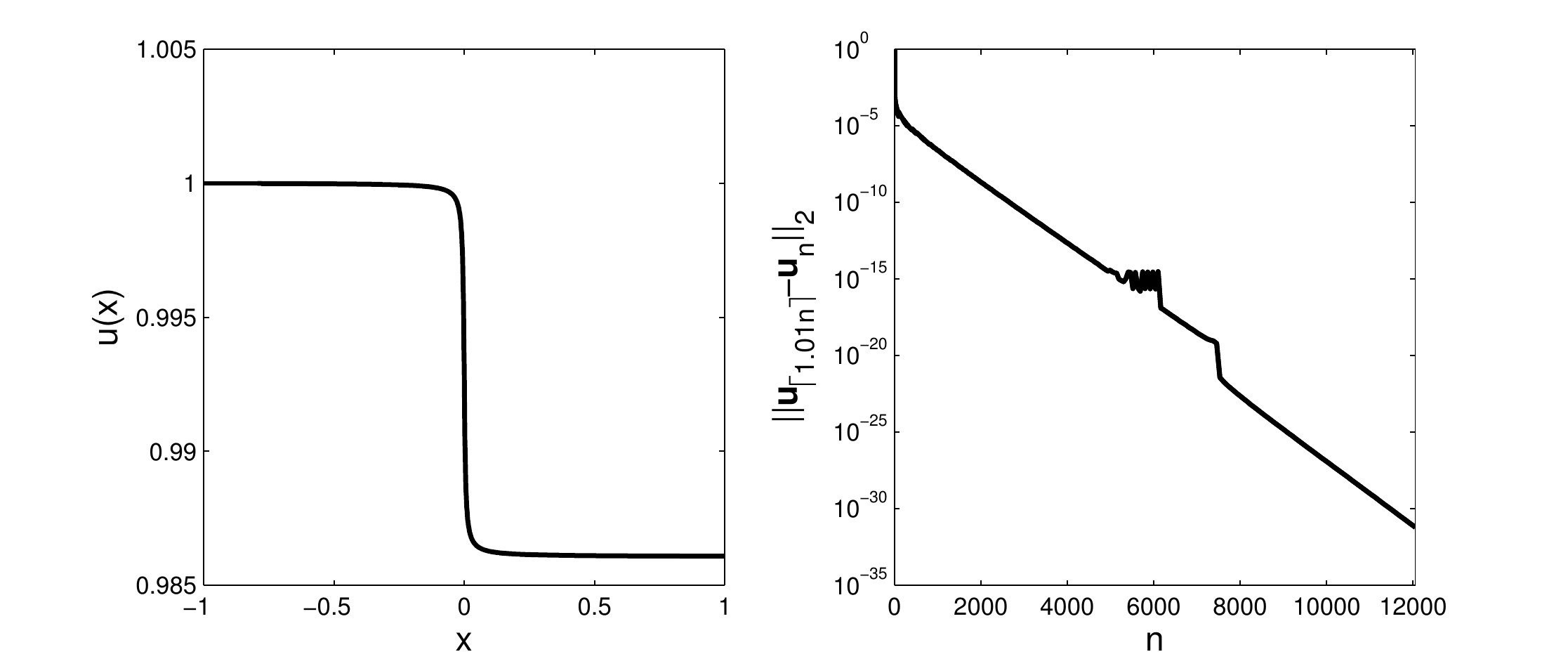}
\caption{Left: The computed solution to
(\ref{eq:secondexample}) with $a=5\times 10^4$. Right: Plot of the Cauchy error
for the
solution coefficients, which shows the 2-norm difference between the
coefficients of the approximate solution when solving an $n\times n$ and
an $\lceil 1.01n\rceil \times \lceil 1.01n\rceil$ matrix system.
}  \label{fig:largevariablecoeffs}
\end{figure}
The exact solution to (\ref{eq:secondexample}) is
\[
u(x) = 
\exp\left(-\frac{\tan^{-1}(\sqrt{a}x) + \tan^{-1}(\sqrt{a})}{\sqrt{a}}\right),
\]
which can be approximated to machine precision by a polynomial of
degree $5,\!377$, determined by interpolating at Chebyshev points.  On the other hand, the computed solution $\tilde{u}(x)$, is a polynomial
of degree $5,\!093$ such that  
\[
\left(\int_{-1}^{1} \left(u(x) - \tilde{u}(x)\right)^2\dx\right)^{\frac{1}{2}} = 
 2.86\times 10^{-15}.
\]
The solution contains a thin boundary layer and contains a singularity in the complex plane 
which is close in proximity to the $[-1,1]$. This means 
that the associated Berstein ellipse is restricted and therefore, a large degree polynomial is required 
to approximate the solution (see, for example, \cite{Davis_75_01}). 
A plot of the solution and the Cauchy error  are included
in Figure \ref{fig:largevariablecoeffs}.  The Cauchy error plot confirms that the
solution is, up to machine precision, independent of the number of coefficients
in its expansion for $n\geq 5,\!100$.

\section{Ultraspherical polynomials and high order differential equations}\label{sec:higherorder}

We now generalize the approach to high order differential equations of the
form
\begin{equation}
\sum_{\lambda = 0}^N a^{\lambda}(x)\frac{\dkf^{\lambda}u(x)}{\dx^\lambda} = f(x)
\hspace{1cm} \text{on } [-1,1],
\label{eq:higherDE}
\end{equation}
with general boundary conditions $\B u = \mathbf c$.    We assume that the boundary operator $\B$
is given in terms of the Chebyshev coefficients of $u$.  For example, for Dirichlet conditions
	$$\B = \begin{pmatrix} T_0(-1) & T_1(-1) & T_2(-1) & \cdots \\
					T_0(1) & T_1(1) & T_2(1) & \cdots\end{pmatrix} = \begin{pmatrix} 1 & -1 & 1 & -1 & \cdots \\
					1 & 1 & 1 & 1 & \cdots\end{pmatrix},$$
and for Neumann conditions
	$$\B = \begin{pmatrix} T_0'(-1) & T_1'(-1) & T_2'(-1) & \cdots \\
					T_0'(1) & T_1'(1) & T_2'(1) & \cdots\end{pmatrix}  = \begin{pmatrix} 0 & 1 & -4 & \cdots & (-1)^{k+1} k^2& \cdots \\
					0 & 1 & 4 & \cdots & k^2 & \cdots \end{pmatrix}.$$
We can also impose less standard boundary conditions;  e.g., we can impose that the solution integrates to a constant by using the Clenshaw--Curtis weights \cite{ClenshawCurtis} (computable in $\O(n \log n)$ operations \cite{FastClenshawCurtisWeights}) or it evaluates to a fixed constant in $(-1,1)$.  In fact, because we are using boundary bordering, any boundary condition which depends linearly on the solution's coefficients can be imposed in an automated manner.


The approach of the first order method relied on three relations:
differentiation (\ref{eq:diffrelation}), multiplication (\ref{eq:Multrelation}) and conversion
(\ref{eq:transformation}).
To generalize the spectral method to higher order
differential equations we use similar relations, now in terms of higher order
ultraspherical polynomials. 
	
The ultraspherical (or Gegenbauer) polynomials
$C_0^{(\lambda)}(x),C_1^{(\lambda)}(x),\ldots$ are a family of polynomials
orthogonal with respect to the weight
\[
(1-x^2)^{\lambda-\frac{1}{2}}.
\]
We will only use ultraspherical polynomials for $\lambda=1,2,\ldots$, defined
uniquely by  normalizing the leading coefficient so that
	$$C_k^{(\lambda)}(x) = \frac{2^k (\lambda)_k}{k!} x^k + \O(x^{k-1}),$$ 
where
$(\lambda)_k = \frac{(\lambda+k-1)!} {(\lambda-1)!}$ denotes the
\emph{Pochhammer
symbol}.  In particular, the ultraspherical polynomials with $\lambda=1$ are the
Chebyshev polynomials of the second kind, which we denote by $C^{(1)}$. 

Importantly, ultraspherical polynomials satisfy
an analogue of (\ref{eq:diffrelation}) \cite{NISTHandbook},
\begin{equation}
\frac{dC_k^{(\lambda)}}{\dx} = 
\begin{cases}
2 \lambda C_{k-1}^{(\lambda+1)} & k \geq1\\
0 & k=0.\\ 
\end{cases}, \qquad \lambda \geq 1.
\label{eq:higherdiff}
\end{equation}
Moreover, they also satisfy an analogue to (\ref{eq:transformation})
\begin{equation}
C_k^{(\lambda)} = 
\begin{cases}
\frac{\lambda}{\lambda + k} \left(C_k^{(\lambda+1)} -
C_{k-2}^{(\lambda+1)}\right) & k\geq2\\
\frac{\lambda}{\lambda + 1} C_1^{(\lambda + 1)} & k=1\\
C_0^{(\lambda + 1)} & k=0,\\
\end{cases} \qquad \lambda\geq1.
\label{eq:highertransformation}
\end{equation}

Suppose that $u(x)$ is represented as the Chebyshev series
(\ref{eq:uChebSeries}). Then, for $\lambda = 1, 2,\ldots$, 
(\ref{eq:diffrelation}) implies that
\[
\frac{\dkf^\lambda u(x)}{\dx^\lambda} = \sum_{k=1}^\infty
ku_k\frac{\dkf^{\lambda-1} C^{(1)}_{k-1}(x)}{\dx^{\lambda-1}}.
\]
By applying the relation (\ref{eq:higherdiff}) $\lambda-1$ times we obtain
\[
\frac{\dkf^\lambda u(x)}{\dx^\lambda} =
2^{\lambda-1}(\lambda-1)!\sum_{k=\lambda}^\infty
k u_k C^{(\lambda)}_{k-\lambda}(x).
\]
This means that the $\lambda$-order differentiation
operator  takes the form
\[
\mathcal{D}_\lambda = 
2^{\lambda-1} (\lambda-1)!
\begin{pmatrix}
\overbrace{ 0\quad\cdots\quad0}^{\mbox{$\lambda$ times}}&\lambda&&&\\
& &\lambda+1&\\
& & &\lambda+2&\\
   & & & &\ddots\\
\end{pmatrix}.
\]
The $\lambda$-order differentiation operator that does not change bases can be constructed from recurrence relations \cite{Doha_04_01} and 
this can be used to construct spectral methods \cite{Doha_02_01}. However, the differentiation operator is not sparse.

In the process of differentiation $\D_\lambda$
converts coefficients in a Chebyshev series to coefficients in a
$C^{(\lambda)}$ series. Moreover, using
(\ref{eq:highertransformation}) the operator which converts
coefficients in $C^{(\lambda)}$ series to those in a
$C^{(\lambda+1)}$ series is given by
\[
\S_\lambda = 
\begin{pmatrix}
1&            & -\frac{\lambda}{\lambda+2} &               & \\
 & \frac{\lambda}{\lambda+1}&              &  -\frac{\lambda}{\lambda+3} &\\
 &            & \frac{\lambda}{\lambda+2}  &         & 
-\frac{\lambda}{\lambda+4}&\\
&&            & \ddots  &         &  \ddots \\
\end{pmatrix}.
\]

As before, we also require a multiplication operator, but this time representing the multiplication between two
$C^{(\lambda)}$ ultraspherical
series.

\subsection{Multiplication operator for ultraspherical series}

In order to handle the variable coefficients in (\ref{eq:higherDE}), we must
represent multiplication of two
ultraspherical series in coefficient space. Given
two functions
\[
a(x) = \sum_{j= 0}^\infty a_j C^{(\lambda)}_j(x) \qqand u(x) =
\sum_{k= 0}^\infty u_k C^{(\lambda)}_k(x),
\]
we have
\begin{equation}
a(x)u(x) = \sum_{j= 0}^\infty\sum_{k= 0}^\infty a_ju_k
C^{(\lambda)}_j(x)C^{(\lambda)}_k(x).
\label{eq:mult}
\end{equation}
To obtain a $C^{(\lambda)}$ series for $a(x)u(x)$ we use the linearization
formula given by Carlitz \cite{Carlitz_61_01}, which takes the form
\begin{equation}
C_j^{(\lambda)}(x)C_k^{(\lambda)}(x) = \sum_{s=0}^{\min(j,k)}
c^\lambda_s(j,k)C_{j+k-2s}^{(\lambda)}(x)
\label{eq:linearization}
\end{equation}
where 
\begin{equation}
c^\lambda_s(j,k) = \frac{j+k+\lambda-2s}{j+k+\lambda
-s}\frac{(\lambda)_s(\lambda)_{j-s}(\lambda)_{k-s}}{s!(j-s)!(k-s)!}\frac{
(2\lambda)_{j+k-s} } {(\lambda)_{j+k-s} }\frac{(j+k-2s)!}{(2\lambda)_{j+k-2s}}
\label{eq:cs}
\end{equation}
and $(\lambda)_{k} = \frac{(\lambda+k-1)!}{(\lambda-1)!}$ is the Pochhammer
symbol. We
substitute (\ref{eq:linearization}) into (\ref{eq:mult}) and rearrange the
summation signs to obtain 
\begin{equation}
a(x)u(x) = \sum_{j= 0}^\infty \left(\sum_{k=0}^\infty\sum_{s=\max(0,k-j)}^k
a_{2s+j-k}c^\lambda_{s}(k,2s+j-k) u_k \right) C_j^{(\lambda)}(x).
\label{eq:Multform}
\end{equation}
From (\ref{eq:Multform}) the $(j,k)$ entry of the multiplication operator
representing the product of
$a(x)$ in a $C^{(\lambda)}$ series is
\[
\M_\lambda[a]_{j,k} =\sum_{s=\max(0,k-j)}^k
a_{2s+j-k}c^\lambda_{s}(k,2s+j-k),
\hspace{1cm} j,k\geq 0.
\]
In practice, $a(x)$ will be approximated by a truncation of its $C^{(\lambda)}$ series,
\begin{equation}\label{eq:Clambdaseries}
a(x) \approx \sum_{j=0}^{m-1} a_jC^{(\lambda)}_j(x),
\end{equation}
and with this approximation the matrix $\P_n\M_\lambda[a]\P_n^\top$ is banded with
bandwidth $m$ for $n>m$.  The expansion \eqref{eq:Clambdaseries} can be computed
by approximating the first $m$ Chebyshev coefficients in the Chebyshev series
for $a(x)$ and then applying a truncation of the conversion operator
$\S_{\lambda-1} \cdots S_0$.

The formula for $c_s^\lambda(j,k)$, (\ref{eq:cs}),  cannot be used
directly to form $\M_\lambda[a]$ due to arithmetic overflow
issues that arise for $j, k \geq 70$. Instead, we cancel terms
in the numerator and denominator of (\ref{eq:cs}) and match up the remaining
terms of similar magnitude to obtain an equivalent, but more numerically stable formula
\begin{eqnarray}
c^\lambda_s(j,k) &= &\frac{j+k+\lambda-2s}{j+k+\lambda
-s}\times \prod_{t=0}^{s-1} \frac{\lambda+t}{1+t}  \times
\prod_{t=0}^{j-s-1} \frac{\lambda+t}{1+t} \nonumber\\&\times&
\prod_{t=0}^{s-1} \frac{2\lambda+j+k-2s+t}{\lambda+j+k-2s+t}\times
\prod_{t=0}^{j-s-1} \frac{k-s+1+t}{k-s+\lambda+t}.
\label{eq:myproduct}
\end{eqnarray}
All the fractions are of magnitude $\mathcal{O}(1)$ in size, and hence
$\P_n\M_\lambda[a]\P_n^\top$ can be
formed in floating point arithmetic.  For the purposes of computational speed, we
only apply (\ref{eq:myproduct}) once per entry, and use the recurrence relation
\begin{eqnarray}
c^\lambda_{s+1}(j,k+2) = c^\lambda_{s}(j,k)&\times&
\frac{j+k+\lambda-s}{j+k+\lambda-s+1} \times
\frac{\lambda+s}{s+1}\times\nonumber\\
&\times&\frac{j-s}{\lambda+j-s-1}\times\frac{2\lambda+j+k-s}{\lambda+j+k-s}
\times\frac{k-s+\lambda}{k-s+1}\nonumber
\end{eqnarray}
to generate all the other terms required.

\begin{remark} \hbox{ }In the special case when $\lambda=1$, 
the multiplication operator $\M_1[a]$ can be decomposed as a Toeplitz
operator plus a Hankel operator.\end{remark}

\subsection{Discretization of the system}
We now have everything in place to be able to solve high order differential
equations of the form (\ref{eq:higherDE}). Firstly, we can represent the
differential operator as 
\[
\L : = \M_N[a^N]\D_N + \sum_{\lambda=1}^{N-1}
\S_{N-1}\cdots\S_\lambda\M_\lambda[a^\lambda]\D_\lambda +
\S_{N-1}\cdots\S_0\M_0[a^0],
\]
which takes coefficients in a Chebyshev series to those in a
$C^{(N)}$ series. Due to this fact, the right hand side
$f(x)$ must be expressed in terms of its coefficients in a $C^{(N)}$
series.  Moreover, 
we impose the $K$ boundary conditions on the solution by replacing the last $K$
rows of
$\P_n\L \P_n^\top$ and permute these to the first $K$ rows.  That is,
in order to obtain an
approximate solution to (\ref{eq:higherDE}) we solve the system
\[
A_n\left(\begin{array}{c}
u_0\\
u_1\\
\vdots\\
u_{n-1}
\end{array}\right)
=
\left(\begin{array}{c}
\mathbf{c}\vspace{15pt}\\
\P_{n-K}\S_{N-1}\cdots \S_0\mathbf{f}\\
\vspace{4pt}
\end{array}\right),
\]
where
	$$A_n = \begin{pmatrix}
\mathcal{B}\P_n^\top \\
\\
\P_{n-K}\L \P_n^\top \\
\\
\end{pmatrix}
$$
and $\mathbf f$ is again a vector containing the Chebyshev coefficients of the
right-hand side $f$.  The solution $u(x)$ is then approximated by the $n$-term
Chebyshev series:
\[
u(x) \approx \sum_{k=0}^{n-1} u_kT_k(x).
\]
\subsection{Numerical examples}
For the first example we consider the Airy differential equation 
\begin{equation}
\epsilon u''(x) - xu(x) = 0 \qquad \text{with} \qquad 
u(-1)=\Ai\left(-\sqrt[3]{\frac{1}{\epsilon}}\right), \text{ }
u(1)=\Ai\left(\sqrt[3]{\frac{1}{\epsilon}}\right)
\label{eq:airyeqn}
\end{equation}
where $\Ai(\cdot)$ is the Airy function of the first kind. 
\begin{figure}
\begin{minipage}[b]{.5\textwidth}\centering
\includegraphics[width=\textwidth]{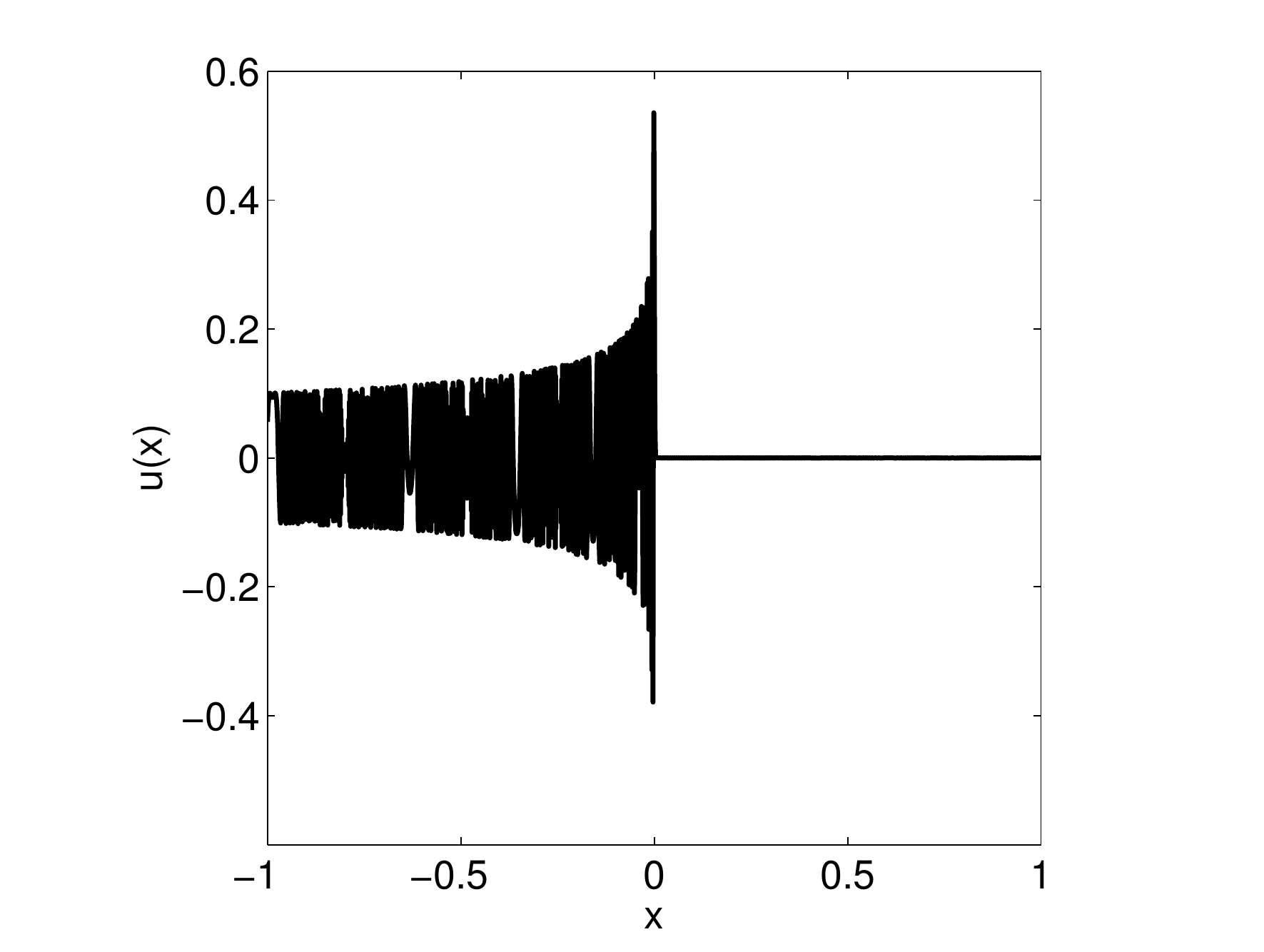}
\end{minipage}
\begin{minipage}[b]{.5\textwidth}\centering
\includegraphics[width=\textwidth]{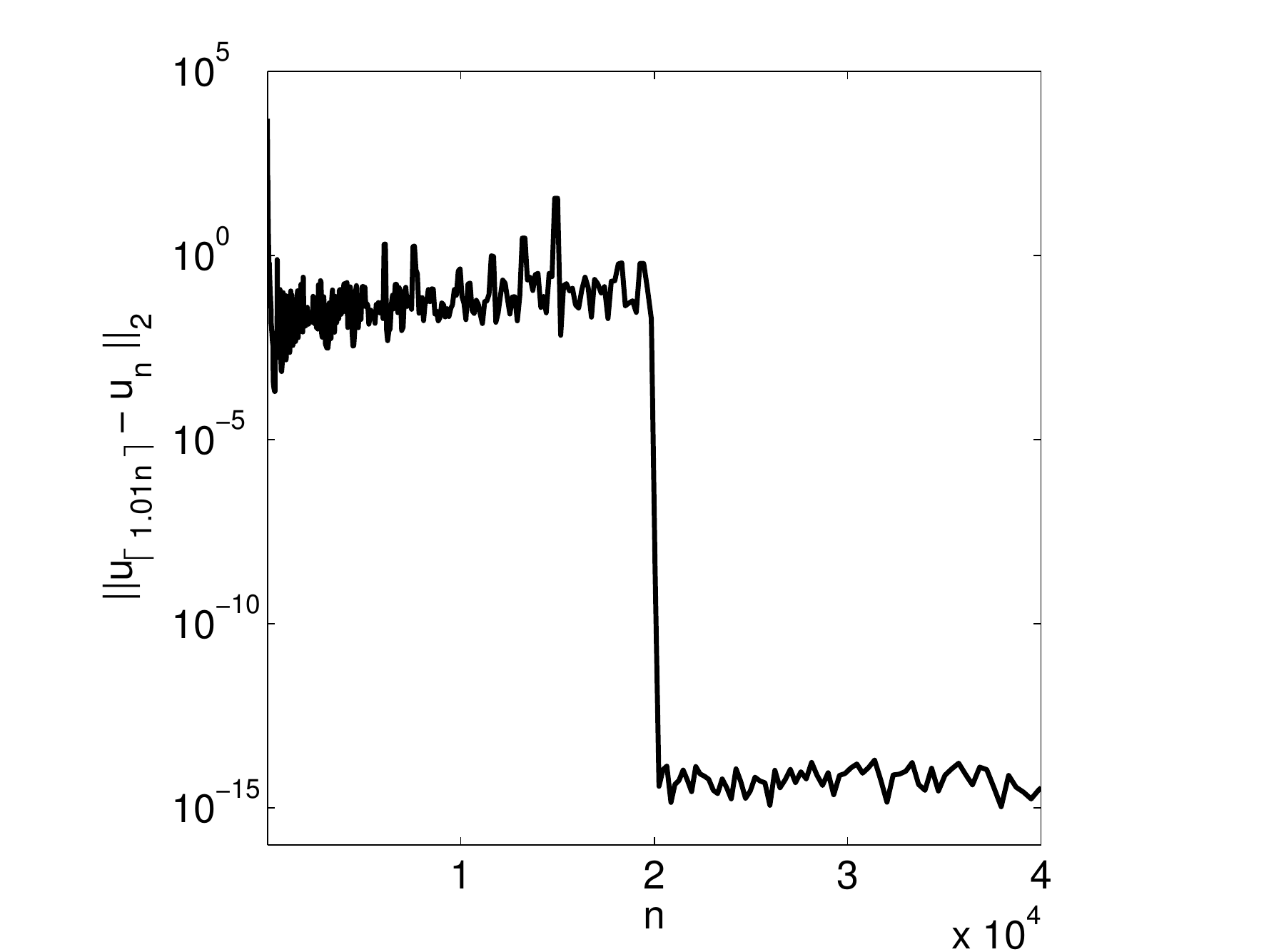}
\end{minipage}
\caption{Left: The highly oscillatory solution to
(\ref{eq:airyeqn}) with $\epsilon = 10^{-9}$. Right: The Cauchy error
for the
solution coefficients. The plot shows the 2-norm difference between the
coefficients of the approximate solution when solving an $n\times n$ and
$\lceil 1.01n\rceil \times \lceil 1.01n\rceil$ matrix system.} 
\label{fig:airyeqn}
\end{figure}

\begin{figure}
  \centering
\includegraphics[width=.5\textwidth]{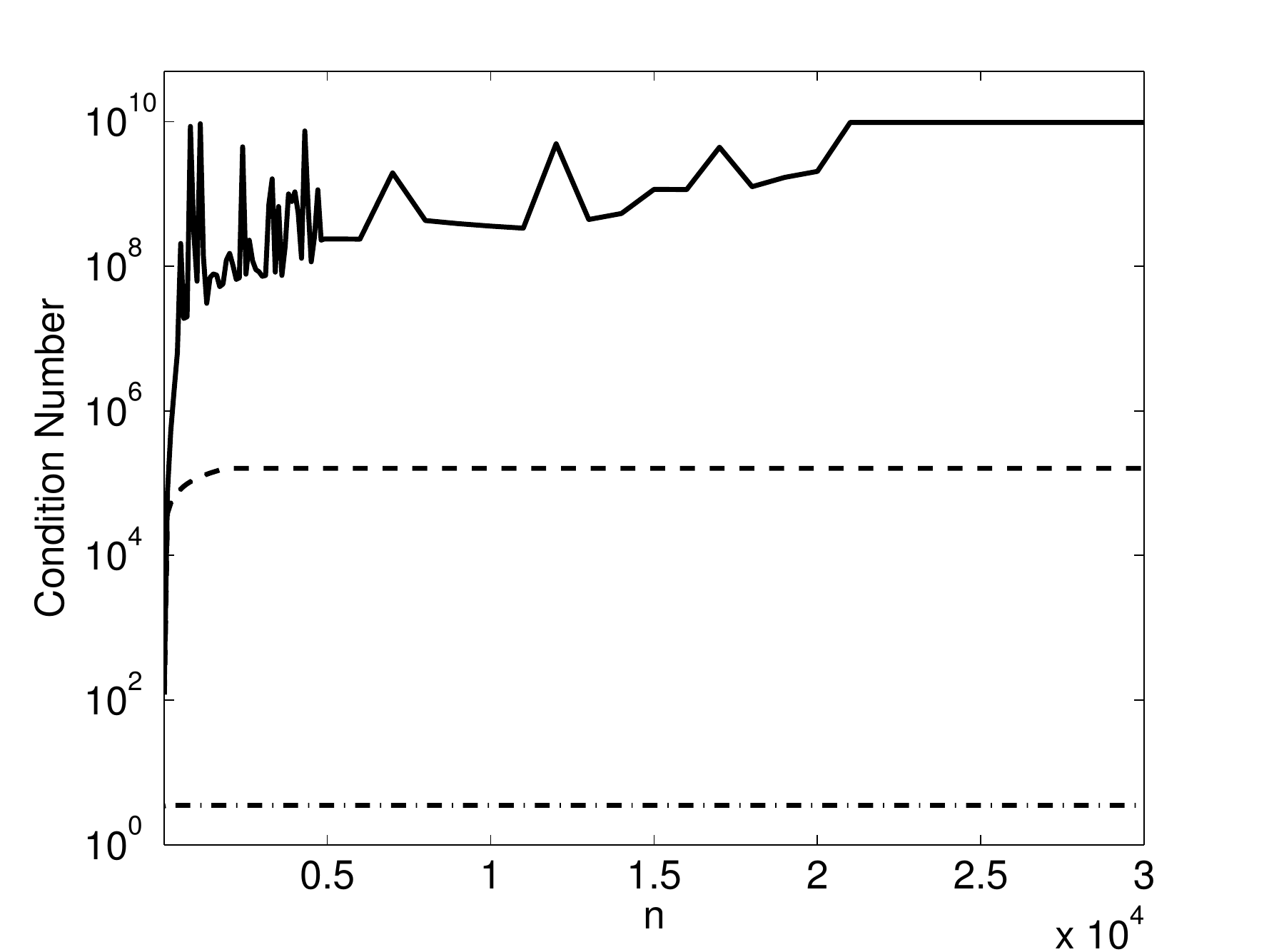}
\caption{Plot of the condition number of the linear systems  to solve (\ref{eq:airyeqn}) against the size of the discretization, for $\epsilon=1\times
10^{-9}$ (solid), 
$\epsilon=1\times 10^{-4}$ (dashed) and $\epsilon=1$ (dot-dashed).  For $\epsilon=1$ we employ the diagonal preconditioner presented in section \ref{sec:stable}.     The plot demonstrates
that the $2$-norm condition number is bounded from above. The observed error
in the solution is considerably better than what is suggested by the bounding constants, as seen in Fig.~\ref{fig:airyeqn}.} 
\label{fig:airycond}
\end{figure}

In Figure \ref{fig:airyeqn} we take $\epsilon=10^{-9}$ and plot the
computed solution which is a polynomial of degree $20,\!003$.  The exact solution
to (\ref{eq:airyeqn}) is the scaled Airy function,
\[
u(x) = \Ai\left(\sqrt[3]{\frac{1}{\epsilon}}x\right).
\]
Letting $\tilde{u}(x)$ denote the computed solution, we have
\[
\left(\int_{-1}^1 \left(u(x)-\tilde{u}(x)\right)^2\right)^{1/2} =
2.44\times 10^{-12}
\]
which is surprisingly good when compared with the ill-conditioning inherent in this singularly perturbed differential equation. Numerically we witness that the spectral method delivers much better accuracy than standard bounds based on the condition number would suggest.  
The Cauchy error plot in Figure \ref{fig:airyeqn} indicates that
the solution coefficients themselves are resolved to essentially machine precision, for $n\geq
20,\!000$. In Figure \ref{fig:airycond} we show numerical evidence that with a simple diagonal preconditioner, which we analysis in the next section the 
condition number of the linear systems formed are bounded for all $n$. Later,
for
$\epsilon=1$, we also show in Figure \ref{fig:higherordernorms} that the derivatives
of the solution are well approximated.

For the second example we consider the boundary layer problem,
\begin{equation}
\epsilon u''(x) - 2x\left(\cos(x)-\frac{8}{10}\right)u'(x) +
\left(\cos(x)-\frac{8}{10}\right)u(x) = 0
\label{eq:boundarylayer}
\end{equation}
with boundary conditions
\[
u(-1)=u(1)=1.
\]
Perturbation theory shows that the solution has two boundary layers at
$\pm \cos^{-1}(8/10)$ both of width $\O(\epsilon^{1/4})$. 
In Figure \ref{fig:boundarylayer} we take $\epsilon=10^{-7}$. The
computed solution is of degree $15,\!394$ and it is confirmed by the
Cauchy error plot that the solution is well-resolved. 
\begin{figure}
  \centering
    \includegraphics[width=\textwidth]{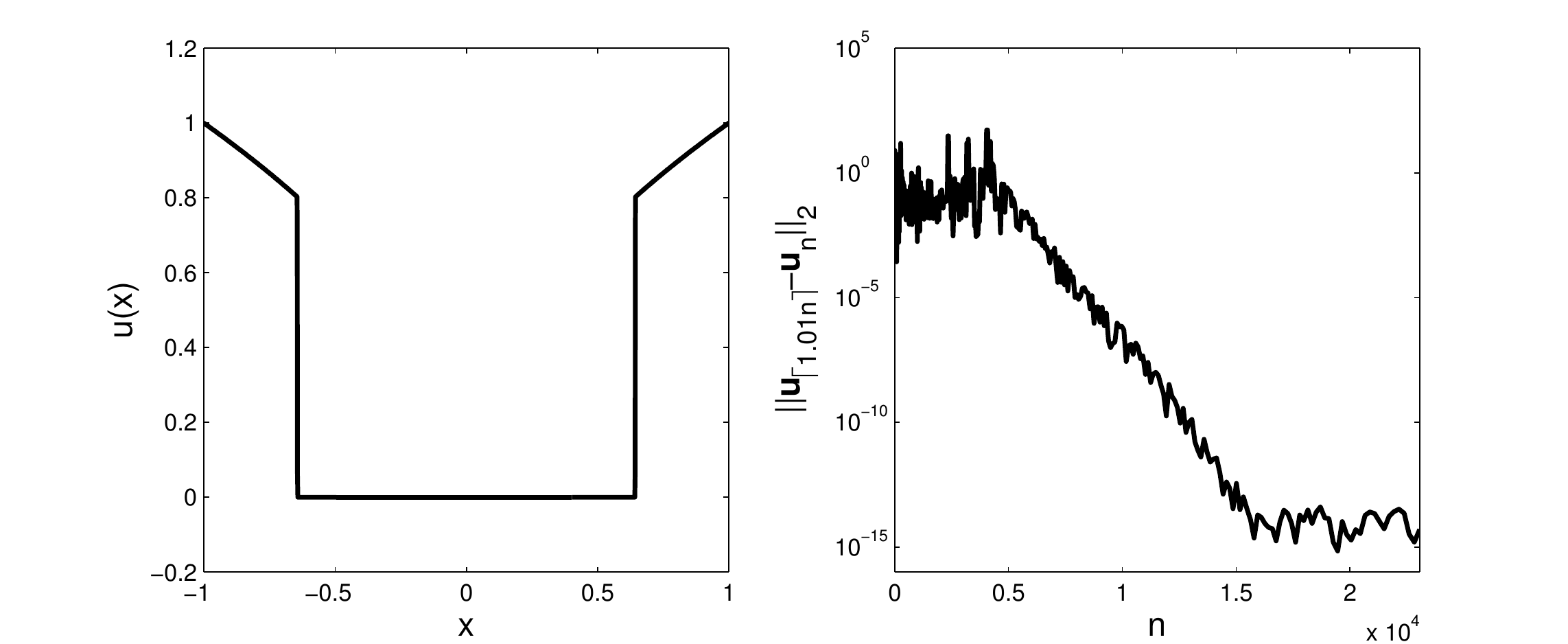}
\caption{Left: The solution to the boundary
layer problem (\ref{eq:boundarylayer}) with $\epsilon = 10^{-7}$. Right: The Cauchy error in
the $2$-norm for the solution coefficients.} 
\label{fig:boundarylayer}
\end{figure}

For the last example we consider the high order differential equation 
\[
u^{(10)}(x) + \cosh(x)u^{(8)}(x) + x^2u^{(6)}(x) + x^4u^{(4)}(x) +
\cos(x)u^{(2)}(x) + x^2u(x) = 0 
\]
with boundary conditions
\[
u(-1)=u(1)=0, \text{ } u'(-1)=u'(1)=1, \text{ } u^{(k)}(\pm1)=0,\text{
}2\leq k\leq 4.
\]
This is far from a practical example, and an exact solution seems difficult to 
construct. Instead, we note that if $u(x)$ is the solution
then it is odd; that is, $u(x) = -u(-x)$. Our method does not impose such a
condition and therefore, we can use it along with the Cauchy error to gain
confidence in the computed solution. The computed solution $\tilde{u}(x)$ is of
degree $55$ and plotted in Figure \ref{fig:higherorderexample}. Moreover, the
computed solution is odd to about machine precision,
\[
\left(\int_{-1}^1 \left(\tilde{u}(x)+\tilde{u}(-x)\right)^2\right)^{1/2} =
 1.252 \times 10^{-14}.
\]

\begin{figure}
  \centering
    \includegraphics[width=\textwidth]{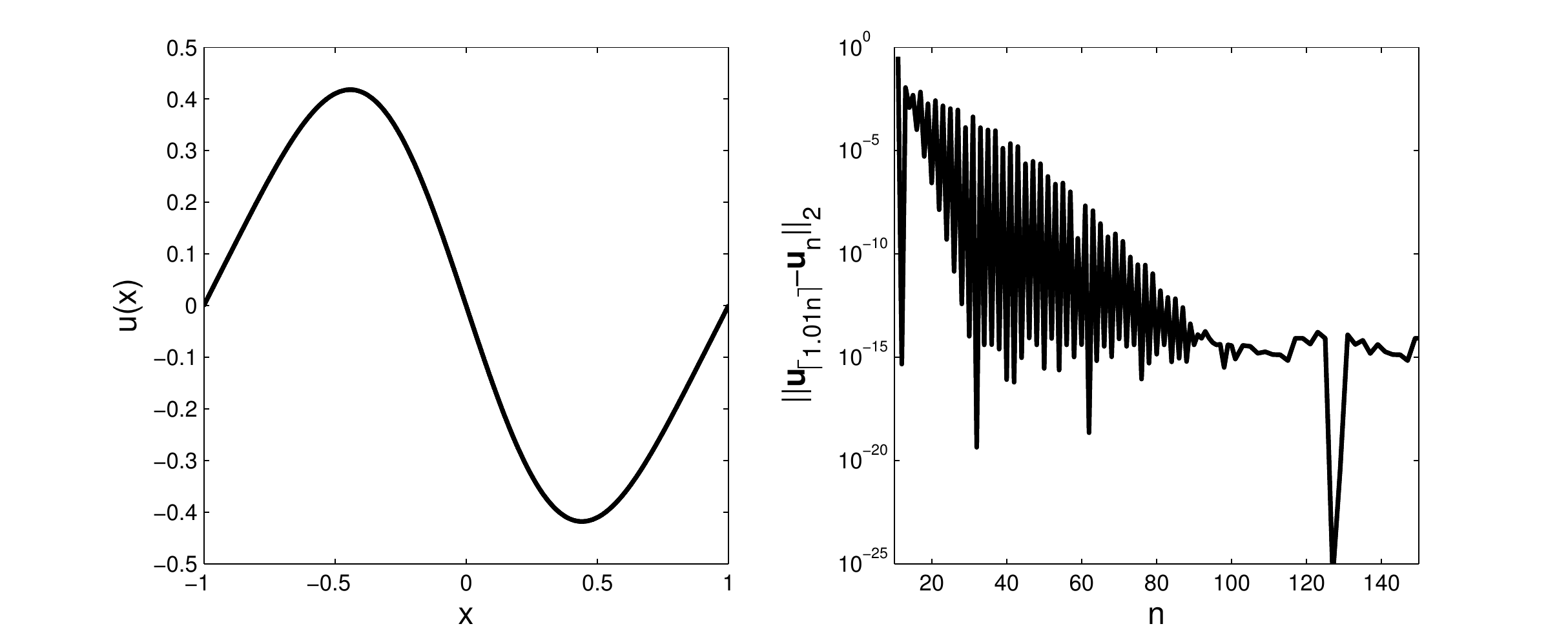}
\caption{Left: Plot of the solution to the $10$th order differential equation.
Right: Plot of the Cauchy error in
the $2$-norm for the solution coefficients.} 
\label{fig:higherorderexample}
\end{figure}


%
%
\section{Stability and convergence}\label{sec:preconditioner}\label{sec:stable}

The $2$-norm condition number of a matrix $A\in\mathbb{C}^{n\times n}$ is defined as
\[
\kappa(A) = \left\|A\right\|_2 \left\|A^{-1}\right\|_2.
\]

Without any preconditioning $\kappa\!\begin{pmatrix} A_n \end{pmatrix}$ grows proportionally with $n$, which
is significantly better than the typical growth of $\O(n^{2N})$ in the
condition number for the standard tau and collocation methods (see section
$4.3$ of \cite{Canuto_06_01}). However, the accuracy seen in practice even
outperforms this: the backward error is consistent with a numerical method with 
bounded condition number.  Later, we will show that a trivial,
diagonal preconditioner that scales the columns results in a linear system with a bounded condition
number.  However, we observe that even without preconditioning the linear systems can be solved to the same accuracy.  We explain this with the following proposition that the stability of QR is not affected by column scaling. 


\begin{proposition}
Suppose $R$ is a diagonal matrix. Solving $A \mathbf c =
\mathbf b$ using QR (with Givens rotations) is stable if QR applied to solve $A R \mathbf q = \mathbf
b$ is stable and $\|R\|_\infty \leq 1$.
\end{proposition}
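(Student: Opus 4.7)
The plan rests on the fact that Givens rotations are invariant under column scaling. A Givens rotation used to zero an entry in column $j$ is parameterized by $(\cos\theta,\sin\theta)$ computed from the ratio of two entries in column $j$; scaling the column by $R_{jj}$ preserves that ratio, so the same rotation is generated whether one operates on $A$ or on $AR$. Consequently, in exact arithmetic Givens QR produces the same orthogonal factor $Q$ for both problems, and if $U$ denotes the upper triangular factor of $A$ then $UR$ is the upper triangular factor of $AR$. This reduces the entire comparison between the two algorithms to their back-substitution steps, which share the right-hand side $Q^\top \mathbf{b}$.

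The next step is to pass the backward error through the back-substitution. Solving $A\mathbf{c}=\mathbf{b}$ by Givens QR amounts to back-substituting on $U\mathbf{c} = Q^\top\mathbf{b}$; solving $AR\mathbf{q}=\mathbf{b}$ and recovering $\mathbf{c}=R\mathbf{q}$ amounts to back-substituting on $UR\,\mathbf{q} = Q^\top\mathbf{b}$ and then multiplying by $R$. The standard componentwise backward error estimate for triangular back-substitution bounds the perturbation entrywise by $n\eta$ times the triangular matrix in use, where $\eta$ is machine precision. For the preconditioned route this yields a perturbation of magnitude at most $n\eta\,|U|\,|R|$; right-multiplying by $R^{-1}$ converts it into a perturbation of $U$ bounded entrywise by $n\eta\,|U|$, exactly the bound the direct back-substitution would deliver. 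The componentwise identity $|R|\,|R^{-1}|=I$ (which uses only that $R$ is diagonal) is what makes the two backward error bounds match.

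The hypothesis $\|R\|_\infty \le 1$ enters when translating the orthogonalization error. The QR phase applied to $AR$ introduces a perturbation of norm $O(\eta)\|AR\|_\infty$; since $\|AR\|_\infty \le \|A\|_\infty\,\|R\|_\infty \le \|A\|_\infty$, this is also $O(\eta)\|A\|_\infty$, so the backward stability estimate for $AR$ transfers without inflation to a backward stability estimate for $A$. Combining the two backward errors (orthogonalization and back-substitution) produces a perturbation $E$ of $A$ of the desired size.

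The main obstacle I anticipate is being precise about which notion of ``stability'' is in play---componentwise backward, normwise backward, or forward---because the cleanest cancellation $|R|\,|R^{-1}|=I$ is componentwise, while the hypothesis $\|R\|_\infty\le 1$ is normwise. A secondary difficulty is that in floating-point arithmetic the multiplication by $R_{jj}$ is not literally exact, so the Givens rotations computed for $A$ and $AR$ are only equal up to additional $O(\eta)$ perturbations; one must verify that these do not accumulate over the $O(n^2)$ rotations in a way that destroys the argument above.
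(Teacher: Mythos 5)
Your proposal is correct and follows essentially the same route as the paper, whose entire proof consists of the two facts you lead with --- invariance of Givens rotations under column scaling and the (componentwise) backward stability of back substitution --- together with a citation to Higham. One minor caveat: the hypothesis $\|R\|_\infty \le 1$ is more naturally spent on recovering $\mathbf{c} = R\mathbf{q}$ without amplifying the error, rather than on the normwise transfer via $\|AR\|_\infty \le \|A\|_\infty$, which by itself does not convert a small perturbation of $AR$ into a comparably small perturbation of $A$ (that conversion involves $R^{-1}$); the columnwise bounds you already invoke for back substitution are the right vehicle for the orthogonalization step as well.
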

\begin{proof} 
	This follows immediately from the invariance of Givens rotations to column scaling and the stability of back substitution, see \cite[pp.~374]{Higham_02_01}.
	\end{proof}

\subsection{A diagonal preconditioner and compactness}  Through-out this
section we assume that $a^N(x) = 1$ (otherwise, divide through by the
coefficient on the highest order term, assuming it is nonsingular). We make the restriction that $K=N$;
that is, the $N$th order differential equation has exactly $N$ boundary
conditions. When $K> N$ it is more appropriate to choose a non-diagonal
preconditioner, but we do not analyse that situation here. 

We
show that there exists a diagonal preconditioner so that the preconditioned
system has bounded condition number in high order norms (Definition
\ref{def:higherordernorms}). For Dirichlet boundary conditions the
preconditioned system has bounded condition number in the $2$-norm. 

Define the diagonal preconditioner by,
\[
\R = \frac{1}{2^{N-1}
(N-1)!}\text{diag}\Bigl(\overbrace{\strut 1,\ldots,1}^{\mbox{$N$
times}},\frac{1}{N},\frac{1}{N+1},\ldots\Bigr).
\]
In practice, we observe that many other diagonal preconditioners also give a
bounded condition number and it is likely that there are preconditioners 
which give better practical bounds on the backward error, but only by a constant factor. 
 
The analysis of $(\ref{ODE})$ will follow from the fact that, on suitably
defined spaces,
$$\begin{pmatrix}
\mathcal{B}\\
\mathcal{L}\\
\end{pmatrix}
\R  =  I + \K,$$
for a compact operator $\K$, where $\mathcal{L}$ is the $N$th order differential
operator and $\mathcal{B}$ is a boundary operator representing $N$ boundary
conditions. To this aim, we need
to be precise on which spaces these operators act on. Since we are working in
coefficient space, we will consider the problems as
defined in $\ell_\lambda^2$ spaces:

\begin{definition}
	The space $\ell_\lambda^2 \subset {\mathbb C}^\infty$ is defined as the Banach space with
norm
	$$\|{\mathbf u}\|_{\ell^2_\lambda} = \sqrt{\sum_{k=0}^\infty |u_k|^2
(k+1)^{2\lambda}} < \infty.$$
\label{def:higherordernorms}
\end{definition}
We now show that the preconditioned operator is a compact perturbation of the identity.

\begin{lemma}
Suppose that the boundary operator $\mathcal B : \ell^2_D \rightarrow {\mathbb C}^K$ is bounded.  Then
\[
\begin{pmatrix}
\mathcal{B}\\
\mathcal{L}\\
\end{pmatrix}
\R  =  I + \K
\]
where $\K : \ell^2_\lambda \rightarrow \ell^2_\lambda$ is compact for $\lambda =
D-1,D,\ldots$.
\end{lemma}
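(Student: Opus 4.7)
The plan is to split $\L$ into its top-order piece plus a remainder, then track each factor through the preconditioner $\R$. Since $a^N(x)\equiv 1$, we have $\M_N[a^N]=I$ and
\[
\L \;=\; \D_N + T,\qquad T \;:=\; \sum_{\mu=0}^{N-1}\S_{N-1}\cdots\S_\mu\,\M_\mu[a^\mu]\,\D_\mu.
\]
A direct computation shows that $\D_N\R$ is a pure shift: the factor $2^{N-1}(N-1)!(N+j)$ in the $(j,N+j)$-entry of $\D_N$ is cancelled exactly by the $(N+j,N+j)$-entry of $\R$, so $(\D_N\R)_{j,N+j}=1$ and all other entries vanish. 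When placed in rows $N,N+1,\ldots$ of the stacked operator $\left(\begin{smallmatrix}\B\\\L\end{smallmatrix}\right)\R$, the shift $\D_N\R$ contributes exactly the identity on coordinates $\{N,N+1,\ldots\}$, so the only obstructions to the whole operator being the identity live in the top $N$ rows (the boundary block) and in $T\R$.

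Disposing of the boundary block is easy. The diagonal $\R$ is bounded as a map $\ell^2_\lambda\to\ell^2_{\lambda+1}$ because its entries are eventually $O(1/k)$, so under the hypothesis $\B:\ell^2_D\to{\mathbb C}^N$ the composition $\B\R:\ell^2_\lambda\to{\mathbb C}^N$ is bounded whenever $\lambda\geq D-1$. Any bounded operator into a finite-dimensional space is finite-rank, hence compact, so the top-block discrepancy
\[
\K_1\;:=\;\begin{pmatrix}\B\R-(I_N,\mathbf{0})\\\mathbf{0}\end{pmatrix}\;:\;\ell^2_\lambda\to\ell^2_\lambda
\]
between the first $N$ rows of our operator and the first $N$ rows of the identity is compact.

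The main work is to show that $T\R$ is compact on $\ell^2_\lambda$ for every $\lambda\geq D-1$. Each summand $\S_{N-1}\cdots\S_\mu\,\M_\mu[a^\mu]\,\D_\mu\R$ with $\mu\leq N-1$ is a composition in which $\D_\mu\R$ is a bounded shift (its nonzero entries are uniformly bounded), $\M_\mu[a^\mu]$ is bounded on $\ell^2_\lambda$ (assuming the coefficient $a^\mu$ is sufficiently regular), and each conversion factor $\S_i$ is bounded. Thus the compactness must come from the conversion factors. The key assertion is that each $\S_i$ is compact as a self-map of $\ell^2_\lambda$: its nonzero band entries $\pm i/(i+j)$ and $\pm i/(i+j+2)$ both decay like $1/j$, yielding the smoothing estimate
\[
\|\S_i u\|_\lambda \;\lesssim\; \|u\|_{\lambda-1}.
\]
Composing this bounded smoother with the Rellich-type compact embedding $\ell^2_\lambda\hookrightarrow\ell^2_{\lambda-1}$ (which, up to unitary identification, is multiplication by the null sequence $(k+1)^{-1}$) produces a compact self-map of $\ell^2_\lambda$. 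Since every summand of $T$ contains at least one $\S_i$ factor (because $\mu\leq N-1$), and compact $\circ$ bounded is compact, each summand of $T\R$ is compact, and so is their finite sum.

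Setting $\K:=\K_1+\left(\begin{smallmatrix}\mathbf{0}\\T\R\end{smallmatrix}\right)$ then yields the claimed decomposition with $\K$ compact on $\ell^2_\lambda$ for every $\lambda\geq D-1$. The main technical obstacle is the smoothing estimate for $\S_i$; proving it amounts to carefully balancing the weight $(j+1)^{2\lambda}$ against the $1/(i+j)^2$ decay of the matrix entries, and is a short but essential calculation. The restriction $\lambda\geq D-1$ arises because $\R$ gains exactly one unit of smoothness, so boundedness of $\B$ on $\ell^2_D$ translates into boundedness of $\B\R$ on $\ell^2_{D-1}$.
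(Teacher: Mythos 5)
Your proposal is correct and follows essentially the same route as the paper: the top-order block $\D_N\R$ together with the first $N$ rows produces the identity, the boundary discrepancy is finite-rank hence compact, and each lower-order term is compact because it contains a conversion operator whose entries decay like $1/k$, with $\D_\mu\R$ and the multiplication operators bounded. The only cosmetic difference is that you establish compactness of $\S_i$ via the smoothing estimate $\|\S_i u\|_{\ell^2_\lambda}\lesssim\|u\|_{\ell^2_{\lambda-1}}$ composed with the compact embedding $\ell^2_\lambda\hookrightarrow\ell^2_{\lambda-1}$, whereas the paper factors $\S_i=(\S_i\R^{-1})\R$ and invokes compactness of $\R$ --- the same mechanism in different packaging.
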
	
 \begin{proof} Firstly, note that
	\begin{align*}
		\begin{pmatrix}\B\cr\L\end{pmatrix} & =  \begin{pmatrix}2^{N-1}(N-1)!\P_N \cr \D_N\end{pmatrix} + \begin{pmatrix}\B -
2^{N-1}(N-1)!\P_N
\cr 0\end{pmatrix}\cr
		& + \begin{pmatrix}0\cr\S_{N-1}\M_{N-1}{[a^{N-1}]}
\D_{N-1} + \cdots + \S_{N-1} \cdots \S_1 \M_1{[a^1]} \D + \S_{N-1} \cdots \S_0
\M{[a^0]}\end{pmatrix}
	\end{align*}
where 
	$\P_N : \ell^2_\lambda \rightarrow {\mathbb C}^N$ is the $N \times
\infty$ projection operator (\ref{eq:projection}).

	Secondly, we remark that $\R : \ell^2_\lambda \rightarrow
\ell^2_{\lambda+1}$ and hence, $\B \R : \ell^2_\lambda \rightarrow {\mathbb C}^N$ is bounded for
$\lambda = D - 1, D,\ldots$.  Furthermore, $\S_k :\ell^2_\lambda
\rightarrow \ell^2_{\lambda+1}$ is bounded for $k = 1,2,\ldots$ and so is,  
$\D_N : \ell^2_\lambda \rightarrow \ell^2_{\lambda - 1}$. It follows that 
	$$\begin{pmatrix}2^{N-1}(N-1)!\P_N  \cr \D_N\end{pmatrix} \R = I :
\ell^2_\lambda
\rightarrow \ell^2_\lambda.$$

 Since $\begin{pmatrix}\B - 2^{N-1}(N-1)!\P_N \cr 0\end{pmatrix} \R :
\ell^2_\lambda
\rightarrow \ell^2_\lambda$ for $\lambda = D-1, D,\ldots$ is bounded and has
finite rank,  it is compact.  Note that ${\cal R}$ is compact as an operator $\R
: \ell^2_\lambda \rightarrow \ell^2_\lambda$,  as are $\S_{N-1},\ldots,\S_1$
(since $\S_k \R^{-1} : \ell^2_\lambda \rightarrow \ell^2_\lambda$ are bounded and
$\R$ is compact).  It follows that the last term
	$$\begin{pmatrix}0\cr\S_{N-1} \cdots  \S_0 \M{[a^0]}\end{pmatrix}{\cal
R} : \ell^2_\lambda \rightarrow \ell^2_\lambda$$
is compact, since $\M[a^0]$ and $\S_0$ are also bounded.  Finally, each of the
intermediate terms are compact since $\begin{pmatrix}0\cr\D_k\end{pmatrix} {\cal
R}: \ell^2_\lambda \rightarrow \ell^2_\lambda$ is bounded and $\S_k$ are
compact. 

We have shown that the preconditioned operator is the identity plus a sum of compact operators and hence a compact perturbation of the identity.
\end{proof}
 
 The compactness of $\K$ allows us to show well-conditioning and convergence.


 \begin{lemma}
 	Suppose that $\begin{pmatrix} \B \cr \L\end{pmatrix} : \ell^2_{\lambda +
1} \rightarrow \ell^2_{\lambda}$ is an invertible operator for some $\lambda \in
\{D-1,D,\ldots\}$.  Then, as $n \rightarrow \infty$, 
 	$$\|A_n R_n \|_{\ell^2_\lambda}  = \O(1)\hbox{ and }\|{(A_n
R_n)^{-1}}\|_{\ell^2_\lambda}  = \O(1),$$
for the diagonal matrix $R_n = \P_n \R \P_n^\top$ and truncated spectral matrix
	$$A_n = \P_n\begin{pmatrix}\B \\ \L\end{pmatrix}\P_n^\top.$$
 \end{lemma}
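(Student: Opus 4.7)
The plan is to identify $A_n R_n$ as the $n$th finite section of the identity-plus-compact operator $I + \mathcal{K}$ from the previous lemma, and then apply classical finite-section theory for second-kind operator equations.

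First, I would establish the identity
\[
A_n R_n = \mathcal{P}_n (I + \mathcal{K}) \mathcal{P}_n^\top,
\]
which follows directly from the previous lemma together with the fact that $\mathcal{R}$ is diagonal, so that $\mathcal{P}_n^\top R_n = \mathcal{R}\mathcal{P}_n^\top$ as maps $\mathbb{C}^n \to \ell^2_\lambda$. The upper bound is then immediate: $\mathcal{P}_n$ and $\mathcal{P}_n^\top$ are contractions in the appropriate weighted norms, so $\|A_n R_n\|_{\ell^2_\lambda} \le \|I + \mathcal{K}\|_{\ell^2_\lambda} = \O(1)$, finite because $\mathcal{K}$ is compact (hence bounded) on $\ell^2_\lambda$ by the previous lemma.

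For the inverse, I would first upgrade the hypothesis to invertibility of $I + \mathcal{K}$ on $\ell^2_\lambda$. Since $\mathcal{R}$ is diagonal with entries $\sim (k+1)^{-1}$, it is a bounded bijection $\ell^2_\lambda \to \ell^2_{\lambda+1}$; combined with the assumed invertibility of $\begin{pmatrix}\mathcal{B}\\\mathcal{L}\end{pmatrix}:\ell^2_{\lambda+1}\to\ell^2_\lambda$, this gives that $I + \mathcal{K} = \begin{pmatrix}\mathcal{B}\\\mathcal{L}\end{pmatrix}\mathcal{R}$ is invertible on $\ell^2_\lambda$. Next, write $P_n = \mathcal{P}_n^\top \mathcal{P}_n$, the orthogonal projection in $\ell^2_\lambda$ onto the first $n$ coordinates (self-adjoint since the weighted inner product on $\ell^2_\lambda$ is diagonal), and note that $P_n \to I$ strongly on $\ell^2_\lambda$ by the summability of the weighted coefficient squares. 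The classical fact that a compact operator composed with a strongly convergent uniformly bounded sequence of self-adjoint projections yields norm convergence then gives $\|P_n \mathcal{K} P_n - \mathcal{K}\|_{\ell^2_\lambda} \to 0$.

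Finally, I would exhibit a full-space inverse and then descend. Setting
\[
T_n := P_n(I + \mathcal{K})P_n + (I - P_n),
\]
a direct calculation yields $T_n - (I+\mathcal{K}) = P_n\mathcal{K}P_n - \mathcal{K} \to 0$ in norm. By a Neumann series argument, $T_n$ is invertible on $\ell^2_\lambda$ for all sufficiently large $n$, with $\|T_n^{-1}\|_{\ell^2_\lambda} = \O(1)$. Because $T_n$ preserves $\mathrm{range}(P_n)$ and acts there as $A_n R_n$ under the canonical isometric identification $\mathbb{C}^n \cong \mathrm{range}(P_n)$, it follows that $\|(A_n R_n)^{-1}\|_{\ell^2_\lambda} = \O(1)$.

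The main obstacle, and the step that most needs care, is the norm-convergence statement $\|P_n \mathcal{K} P_n - \mathcal{K}\|_{\ell^2_\lambda} \to 0$ and the clean transfer of invertibility from $T_n$ on all of $\ell^2_\lambda$ down to its compression $A_n R_n$ on the $n$-dimensional coordinate subspace. The trick of augmenting the finite-section operator by $I - P_n$ is what allows the invertibility (and the Neumann-series bound) to be proved in the ambient infinite-dimensional space and then restricted back to $\mathrm{range}(P_n)$ without loss.
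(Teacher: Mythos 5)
Your proposal is correct and follows essentially the same route as the paper: the same identity $A_nR_n=\P_n(I+\K)\P_n^\top$, the same upgrade of the hypothesis to invertibility of $I+\K$ on $\ell^2_\lambda$ via the diagonal bijection $\R:\ell^2_\lambda\rightarrow\ell^2_{\lambda+1}$, and the same use of compactness to get norm convergence of $\P_n^\top\P_n\K\P_n^\top\P_n$ to $\K$. The only difference is that you spell out the standard finite-section/Neumann-series details (the augmented operator $T_n$ and the descent to the compression) which the paper compresses into ``the lemma follows.''
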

 \begin{proof}
 	Since $\R : \ell^2_{\lambda } \rightarrow \ell^2_{\lambda + 1}$ is invertible, we have that $I + \K : \ell^2_\lambda \rightarrow
\ell^2_\lambda$ is invertible.  The lemma follows since $\K$ is compact,
$$A_nR_n = \P_n (I + \K) \R^{-1} \P_n^\top
\P_n \R \P_n^\top  = I _n+ \P_n \K \P_n^\top,$$
 and $\P_n^\top\P_n \K \P_n^\top\P_n$  converges in norm
to $\K$. 
\end{proof}

 \subsection{Convergence}
 
Denote the coefficients of the exact solution by $\mathbf{u}$, and note that vector $\mathcal{P}_n^\top\mathcal{P}_n\mathbf{u}$ agrees with $\mathbf{u}$ for the first $n$ coefficients and thereafter has zero entries.  We show that our numerical scheme converges at the same rate as $\mathcal{P}_n^\top\mathcal{P}_n\mathbf{u}$ converges to $\mathbf{u}$. 

%
%
%

 \begin{theorem}
Suppose $\mathbf f \in \ell^2_{\lambda - N + 1}$ for some $\lambda \in
\{D-1,D,\ldots\}$, and that $\begin{pmatrix} \B \cr \L\end{pmatrix} :
\ell^2_{\lambda + 1} \rightarrow \ell^2_{\lambda}$ is an invertible operator. 
Define
 	$${\mathbf u}_n = A_n^{-1} \P_n\begin{pmatrix} \mathbf c \cr 
\S_{N-1}\cdots \S_0\mathbf{f} \end{pmatrix}.$$
 Then
 	$$\|{\mathbf u} - \P_n^\top {\mathbf u}_n\|_{\ell^2_{\lambda+1}} \leq C
\| {\mathbf u} - \P_n^\top \P_n {\mathbf u}\|_{\ell^2_{\lambda + 1}} \rightarrow
0.$$
\end{theorem}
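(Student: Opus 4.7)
The plan is to reduce the error estimate to a truncation-residual estimate by exploiting the preconditioned bound from the previous lemma. First I would identify the key algebraic identity: since $\mathbf u$ exactly satisfies $\begin{pmatrix}\mathcal B \\ \mathcal L\end{pmatrix}\mathbf u = \begin{pmatrix}\mathbf c \\ \mathcal S_{N-1}\cdots \mathcal S_0 \mathbf f\end{pmatrix}$, and $A_n = \mathcal P_n \begin{pmatrix}\mathcal B \\ \mathcal L\end{pmatrix} \mathcal P_n^\top$, applying $\mathcal P_n$ to the exact equation and subtracting $A_n \mathbf u_n = \mathcal P_n \begin{pmatrix}\mathbf c \\ \mathcal S_{N-1}\cdots \mathcal S_0 \mathbf f\end{pmatrix}$ yields
$$A_n\bigl(\mathbf u_n - \mathcal P_n \mathbf u\bigr) = \mathcal P_n \begin{pmatrix}\mathcal B \\ \mathcal L\end{pmatrix}\bigl(I - \mathcal P_n^\top \mathcal P_n\bigr)\mathbf u.$$
Thus the inner discretization error equals the image, under $A_n^{-1}$, of the ``tail'' of $\mathbf u$ passed through the continuous operator.

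Next I would bound $A_n^{-1}$ in the correct weighted spaces. Writing $A_n^{-1} = R_n (A_n R_n)^{-1}$ and using the previous lemma gives $\|(A_n R_n)^{-1}\|_{\ell^2_\lambda} = \mathcal O(1)$; since $R_n$ is a bounded map $\ell^2_\lambda \to \ell^2_{\lambda+1}$ uniformly in $n$ (its entries behave like $1/k$), we conclude $\|A_n^{-1}\|_{\ell^2_\lambda \to \ell^2_{\lambda+1}} = \mathcal O(1)$. Combined with the boundedness of $\begin{pmatrix}\mathcal B \\ \mathcal L\end{pmatrix} : \ell^2_{\lambda+1} \to \ell^2_\lambda$ (which is part of the invertibility hypothesis), this yields
$$\bigl\|\mathbf u_n - \mathcal P_n \mathbf u\bigr\|_{\ell^2_{\lambda+1}} \leq C_1 \bigl\|(I - \mathcal P_n^\top \mathcal P_n)\mathbf u\bigr\|_{\ell^2_{\lambda+1}}.$$
A triangle inequality then gives
$$\bigl\|\mathbf u - \mathcal P_n^\top \mathbf u_n\bigr\|_{\ell^2_{\lambda+1}} \leq \bigl\|(I - \mathcal P_n^\top \mathcal P_n)\mathbf u\bigr\|_{\ell^2_{\lambda+1}} + \bigl\|\mathcal P_n^\top(\mathcal P_n \mathbf u - \mathbf u_n)\bigr\|_{\ell^2_{\lambda+1}} \leq (C_1 + 1)\bigl\|(I - \mathcal P_n^\top \mathcal P_n)\mathbf u\bigr\|_{\ell^2_{\lambda+1}},$$
using that $\mathcal P_n^\top$ is an isometry.

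Finally, I need to justify that the right-hand side tends to zero, i.e.\ that $\mathbf u \in \ell^2_{\lambda+1}$ so truncation converges. By hypothesis $\mathbf f \in \ell^2_{\lambda - N + 1}$, and each $\mathcal S_k$ is bounded from $\ell^2_\mu$ to $\ell^2_{\mu+1}$ (the bands decay like $1/k$). Tracking the index through $N$ conversions shows $\mathcal S_{N-1}\cdots \mathcal S_0 \mathbf f \in \ell^2_{\lambda+1} \subset \ell^2_\lambda$, and $\mathbf c \in \mathbb C^N$ trivially lies in $\ell^2_\lambda$, so by invertibility $\mathbf u \in \ell^2_{\lambda+1}$, whence the tail norm vanishes as $n \to \infty$. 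The main obstacle is the bookkeeping in this last paragraph: one must be careful that every operator in the chain is analyzed on the correct weighted space so that the chain of boundedness actually closes up, and that the preconditioner identity $A_n^{-1} = R_n (A_nR_n)^{-1}$ really does upgrade from $\ell^2_\lambda$ to $\ell^2_{\lambda+1}$ uniformly in $n$ rather than only in the limit.
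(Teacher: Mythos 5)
Your argument is correct and is essentially the paper's own proof, just organized in the original variables: the identity $A_n(\mathbf u_n-\P_n\mathbf u)=\P_n\bigl(\begin{smallmatrix}\B\\ \L\end{smallmatrix}\bigr)(I-\P_n^\top\P_n)\mathbf u$ together with $A_n^{-1}=R_n(A_nR_n)^{-1}$ is the same computation the paper performs after substituting $\mathbf v=\R^{-1}\mathbf u$, $\mathbf v_n=R_n^{-1}\mathbf u_n$ and writing $\bigl(\begin{smallmatrix}\B\\ \L\end{smallmatrix}\bigr)\R=I+\K$, and both bounds rest on the previous lemma's uniform estimate for $(A_nR_n)^{-1}$ and the $1/k$ scaling of $\R$. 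One small correction to your final bookkeeping: $\mathcal{S}_0$ does \emph{not} map $\ell^2_\mu\rightarrow\ell^2_{\mu+1}$ (its entries do not decay; only $\mathcal{S}_k$ for $k\geq 1$ gain a weight), so $\mathcal{S}_{N-1}\cdots\mathcal{S}_0\mathbf f\in\ell^2_\lambda$ rather than $\ell^2_{\lambda+1}$ --- which is exactly the membership you need for invertibility to give $\mathbf u\in\ell^2_{\lambda+1}$, so the conclusion is unaffected.
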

\begin{proof}

Let $\mathbf v_n = R_n^{-1} \mathbf u_n$ and $\mathbf v = \R^{-1} \mathbf u$.
First note that 
\[
\begin{pmatrix} \B \\ \L \end{pmatrix}\mathbf{u} = \begin{pmatrix} \B \\ \L \end{pmatrix}\mathcal{R}\mathcal{R}^{-1}\mathbf{u} = \left(I+\K\right)\mathbf{v}  = \begin{pmatrix}\mathbf c \\  \S_{N-1}\cdots \S_0\mathbf{f}
\end{pmatrix} \in \ell_\lambda^2
\]

and that 
\[
\mathbf v_n = (A_nR_n)^{-1} \P_n (I + \K)\mathbf v 
\]
%
Moreover, since $A_n R_n =   \P_n (I + \K) \P_n^\top$,  
	$$\P_n {\mathbf v} = (A_n R_n)^{-1}  \P_n (I + \K) \P_n^\top \P_n
{\mathbf
v},$$
and thus we have 
\begin{align*}
{\mathbf v} - \P_n^\top {\mathbf v}_n &
= {\mathbf v} - \P_n^\top \P_n \mathbf v + \P_n^\top(A_n R_n)^{-1} \P_n ( I +
\K)
\P_n^\top \P_n \mathbf v \cr 
& \qquad \qquad \qquad \qquad \qquad \qquad \qquad- \P_n^\top(A_n R_n)^{-1} \P_n
( I + \K){\mathbf v} \cr
& = (I - \P_n^\top (A_n R_n)^{-1}  \P_n (I + \K)) ( {\mathbf v} -
\P_n^\top \P_n {\mathbf v}).
\end{align*}

Finally, we  use the fact that $\|{\R^{-1} \mathbf u}\|_{\ell^2_\lambda} \leq
\frac{1}{N}\|\mathbf u
\|_{\ell^2_{\lambda + 1}}$ and $\| \R \mathbf v \|_{\ell_{\lambda + 1}} \leq (N
+ 1) \| \mathbf v\|_{\ell^2_\lambda}$ to bound the error in the solution by the error in the Chebyshev series of $u$ and its truncation,
	\begin{align*}
		\|\mathbf u - \P_n^\top \mathbf u_n\|_{\ell^2_{\lambda + 1}}
&\leq (N +
1)\|\mathbf v - \P_n^\top \mathbf v_n\|_{\ell^2_\lambda}  \\
		&\leq (N + 1)\left[1 + \left\|(A_n
R_n)^{-1}\right\|_{\ell^2_\lambda} \left(1 + \|
\K\|_{\ell^2_\lambda}\right) \right] \left\| \mathbf v - \P_n^\top \P_n \mathbf
v\right\|_{\ell^2_\lambda}
\\
		& \leq C  \left\| \mathbf u - \P_n^\top \P_n \mathbf
u\right\|_{\ell^2_{\lambda + 1}}.
	\end{align*} 
Since $\mathbf u \in \ell_{\lambda+1}^2$ we know that $\left\| \mathbf u -
\P_n^\top \P_n \mathbf
u\right\|_{\ell^2_{\lambda + 1}}\rightarrow 0$ as $n\rightarrow \infty$. 
\end{proof}

%

\section{Fast linear algebra for almost banded matrices} \label{sec:fastsolver}

\def\msup{m_{\rm R}}
\def\msub{m_{\rm L}}

The spectral method we have described requires the solution of a linear system
$Ax=b$
where $A\in\mathbb{C}^{n\times n}$. The matrix $A$ is banded, with $\msup =\O(m)$ non-zero superdiagonals and $\msub =\O(m)$ non-zero subdiagonals (so that $m = \msub + \msup + 1$), except for the
first $K$ dense boundary rows.
 The typical
structure of $A$ is depicted in Figure
\ref{fig:Filled}.  Here, we describe a stable algorithm to solve $Ax=b$
in $\O(m^2n)$ operations and with space requirement $\O(mn)$.  

\begin{figure}
\centering
\includegraphics[width=\textwidth]{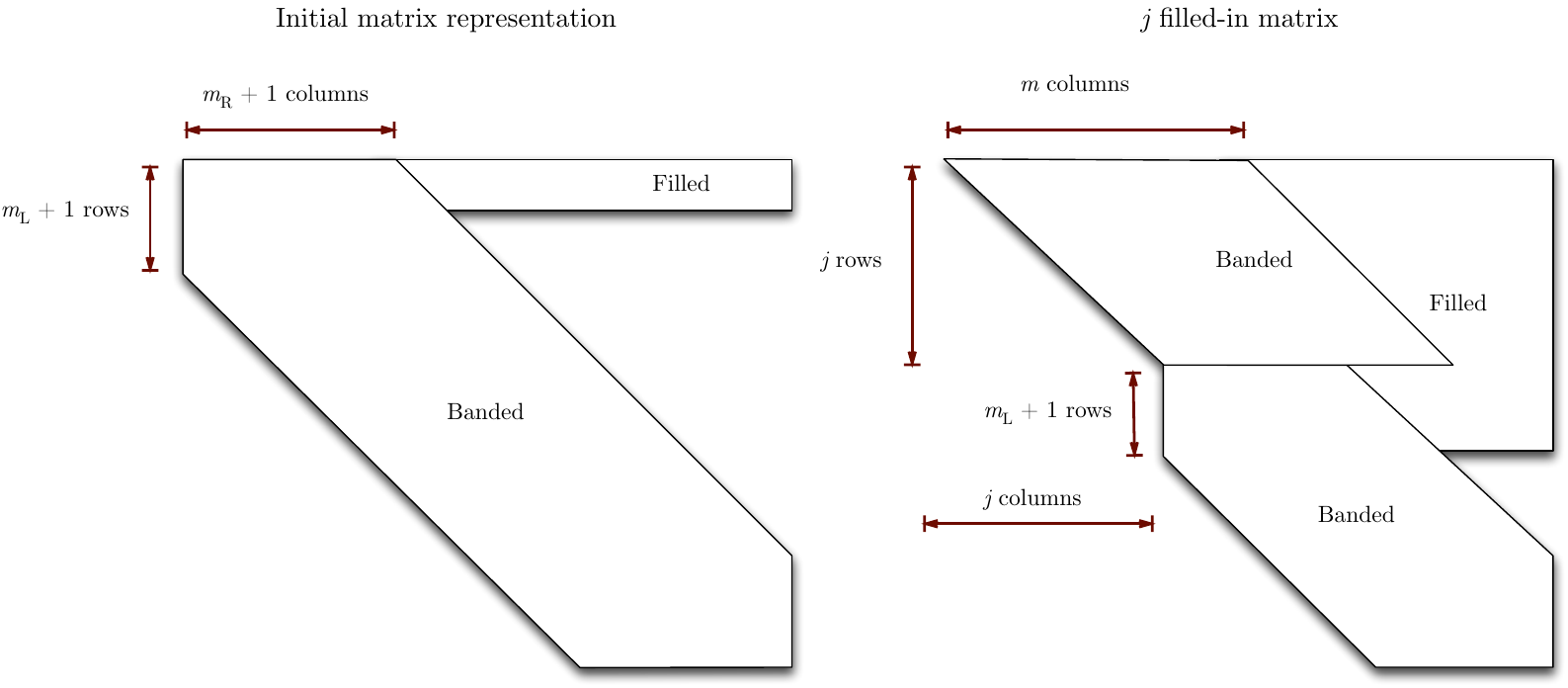}

\caption{\label{fig:Filled} Structure of operators during QR factorization. Left: Depicts the structure of the original differential operator. Right: Depicts a $j$ filled-in matrix obtained after upper triangularizing the first $j$ columns. }
\end{figure}

We will  solve $Ax=b$ by computing a $QR$
factorization using Given's rotations. However, the resulting upper triangular part will be
dense because of \emph{fill-in} caused by the boundary rows.  We will show that these dense rows can still be represented sparsely, and that the resulting upper triangular matrix can be solved in $\mathcal{O}(m^2n)$ operations.  

\begin{remark} \hbox{ }
Alternatively, the $A=QRP^*$ decomposition can be constructed in $\mathcal{O}(m^2n)$ operations by apply Given's rotations to the left and the right, which prevents the boundary row(s) causing fill-in \cite{Chandrasekaran_03_01}. However, with this decomposition it is unclear whether the optimal truncation $n_{\rm opt}$ can be determined in $\O(n_{\rm opt})$ operations.

\end{remark}

\subsection{QR factorization for filled-in matrices}\label{sec:qrfilled}

	Represent the matrix $A$ after the $j$th stage of the QR decomposition, where the $j$th
column has been completely reduced, by $B$.  We claim that $B$ has the form of a $j$ {\it filled-in
matrix}. 
	
\def\ee{{\bf e}}	
	
\begin{definition}
	 $B$ is a $j$ filled-in matrix if, for $k = 1,\ldots, j$,  the $k$th row of $B$ has the
form 
%
%
%
	\begin{align}\label{eq:upperrow}
	{\bf e}_k^\top B = \Biggl(& \overbrace{0,\ldots,0}^{k - 1 \hbox{ times}},
\overbrace{B_{k,k},\dots, B_{k,k+m - 1}}^{\hbox{banded terms}}, \overbrace{{\bf b}_k^\top {\cal
B}_{1:K,k+m:n} }^{\hbox{fill-in terms}}\Biggr),
	\end{align}
where ${\bf b}_k \in {\mathbb C}^K$.  
%
%
%
%
%
Furthermore, every row $k = j +1,\ldots,j+\msub + 1$ has the form
	\begin{align*}
	{\bf e}_k^\top B = 	\Biggl( \overbrace{0,\ldots,0}^{j \hbox{ times}},
\overbrace{B_{k,j+1},\dots, B_{k,k + \msup}}^{\hbox{banded terms}}, \overbrace{{\bf b}_k^\top {\cal
B}_{1:K,k+\msup+1:n}  }^{\hbox{fill-in terms}}\Biggr). 
	\end{align*}
%
 The remaining rows have the form
  	\begin{align}\label{eq:unprocrow}
	{\bf e}_k^\top B = 	\Biggl( \overbrace{0,\ldots,0}^{k - \msub - 1 \hbox{ times}},
\overbrace{B_{k,k - \msub},\dots, B_{k,k + \msup}}^{\hbox{banded terms}}, \overbrace{0,\ldots,0}^{\hbox{$n - k - \msup$ times}}\Biggr). 
	\end{align}
 %

\end{definition}
	See Figure~\ref{fig:Filled} for a depiction of a $j$ filled-in matrix.   Essentially, it is
a banded matrix where the top-right part can be filled with linear combinations of the
boundary rows.   Note that each row of a filled-in matrix takes at most $m +K$ entries to represent,
a bound which is independent of $j$.  Since the QR factorization only applies Given's rotations on the left (i.e. linear combinations of rows), and the initial matrix is a $0$ filled-in matrix, the elimination results in $j$ filled-in matrices.  This means that throughout the elimination, the matrix can be represented with $\mathcal{O}(mn)$ storage.  

	In section \ref{sec:opttrunc}, we perform QR factorization adaptively on the operator.  This is successful because the representation as a $j$ filled-in matrix is independent of the number of columns, and the rows below the $(j + \msub)$th are unchanged from the original operator, hence can be added during the factorization.

\subsection{Back substitution for filled-in matrices}

	After we have performed $n$ stages of Given's rotations, the first $n$ rows are of the form
\eqref{eq:upperrow} and hence are {\it upper triangular}.  Thus, we can now perform back substitution, by truncating the right-hand side. 
The last $m$ rows consist only of banded terms, and standard back substitution is used to calculate
$u_{n-m + 1},\ldots,u_n$.  We then note that the $k$th row imposes the condition
$$
		B_{k,k} u_k  =  c_k - \sum_{s=1}^mB_{k,k+s} u_{k+s}  
			 - {\bf b}_k^\top \sum_{s=k+m+1}^n {\cal B}_{1:K,s} u_{s}
$$
on the solution.  We can thus obtain an $\O(mn)$ back substitution algorithm by the following: 
%
\begin{algorithm}[H]
\caption{Back substitution for filled-in matrices}
\begin{algorithmic}
\State $\mathbf{p}_{n-m}=\mathbf 0$ \qquad\qquad\qquad\qquad\qquad\qquad\hbox{(i.e., a vector of $K$ zeros)}
\For {$k=n-m-1 \to 1$}
\State $\mathbf{p}_k = {u_{k+m+1} \mathcal{B}}_{1:K,k+m+1}  + \mathbf{p}_{k + 1}$
\State $u_k = \frac{1}{B_{k,k}}\left(c_k - \sum_{s=1}^mB_{k,k+s} u_{k+s} - {\mathbf{b}}_k^\top {\mathbf{ p}}_k\right)$
\EndFor
\end{algorithmic}
\end{algorithm}
This is mathematically equivalent to standard back substitution.  However, the reduced number of
operations decreases the accumulation of round-off error.

%
%

\subsection{Optimal truncation}\label{sec:opttrunc}

	One does not know {\it apriori} how many coefficients $n_{\rm opt}$ are required to resolve the solution to relative machine precision.  A
straightforward algorithm for finding $n_{\rm opt}$ is to continually double the discretization size until the difference in the
computed coefficients is below a given threshold, which will result in an $\O(n_{\rm opt} \log
{n_{\rm opt}})$ algorithm.  We will present an alternative approach that achieves the optimal $\O(n_{\rm opt})$ complexity.  
	
	For the simple equation
	$$u' + u = f \qqand u(1) = c,$$
the truncation of the operator is tridiagonal with a single dense boundary row.  This
is equivalent to an {\it inhomogeneous three-term recurrence boundary value problem}.  The problem of
adaptively truncating such recurrence relationships is solvable by (F. W. J.) Olver's algorithm
\cite{OlversAlgorithm}.  The central idea is to apply row reduction (without pivoting) adaptively. 
The row-reduction applied to the right-hand side, when combined with a concurrent adaptive
computation of the homogeneous solutions to the recurrence relationship, allowed an explicit bound
for the {\it relative error}.  An alternative (and simpler) bound for the {\it absolute error} was
 obtained in examples.  The case of dense boundary rows, with an application to the
Clenshaw method \cite{ClenshawTauMethod} as motivation, was also considered.  Adaptation
of the algorithm to higher order difference equations was developed by Lozier \cite{Lozier_80}.

	We  adapt this approach to our case by incorporating it into the QR factorization of
subsection \ref{sec:qrfilled}.  By using QR factorization in place of Gaussian elimination, we avoid
potential numerical stability issues of the original Olver's algorithm in the low order
coefficients.   The key observation is that the boundary terms ${\cal B}$ and the rows of the form
\eqref{eq:unprocrow} can be evaluated lazily, with bounded computational cost per entry.  Thus the
truncation parameter $n$ is not involved in the proposed QR factorization algorithm (only in the
back substitution), hence the optimal truncation $n_{\rm opt}$ can be found adaptively.

	Represent the  Given's rotations that reduces the first $n$ columns of ${\cal A}$ by the
orthonormal operator $Q_n \in {\mathbb C}^{(n + \msub) \times (n + \msub)}$,  so that 
	$$\begin{pmatrix}{ Q}_n^\star \cr & I\end{pmatrix} {\cal A}  =  \begin{pmatrix}R_n & {\cal F} \cr & {\cal W}\end{pmatrix}$$
where $R_n \in {\mathbb C}^{n \times n}$ is upper triangular.   Our right-hand side is 
	$$\begin{pmatrix}{ Q}_n^\star \cr & I\end{pmatrix} \begin{pmatrix}{\bf c} \\ \S_{N-1}\cdots \S_0\mathbf{f}\end{pmatrix} =
\begin{pmatrix} \mathbf r_{1:n}\\\mathbf r_{n+1:n+\msub}\\ \mathbf 0 \end{pmatrix}$$
where we use the fact that $\mathbf f$ has only finite number of nonzero entries and assume that $n + \msub$ is greater than the number of nonzero entries.  Thus our
numerical approximation ${\mathbf u}_n = R_n^{-1} \mathbf r_{1:n}$ has the {\it forward error}:
	$$\begin{pmatrix}{ Q}_n^\star \cr & I\end{pmatrix}  {\cal A} \begin{pmatrix} {\mathbf u}_n \\ {\bf 0}\\{\bf 0}\end{pmatrix}  - \begin{pmatrix}
\mathbf r_{1:n}\\\mathbf r_{n+1:n+\msub}\\ {\bf 0} \end{pmatrix} =  \begin{pmatrix} R_n {\mathbf u}_n \\ {\bf 0}\\{\bf 0}
\end{pmatrix} - \begin{pmatrix} \mathbf r_{1:n}\\\mathbf r_{n+1:n+\msub}\\ {\bf 0} \end{pmatrix} =
\begin{pmatrix} {\bf 0} \\\mathbf r_{n+1:n+\msub}\\ {\bf 0} \end{pmatrix}.$$
Since we calculate $\mathbf r_{n+1:n+\msub}$ during the algorithm, we know the forward error {\it
exactly}.  Having a bound on $\|{\cal A}^{-1}\|$ allows us to bound the {\it backward error}; i.e.,
$\|{\mathbf u} - {\cal P}_n^\top {\mathbf u}_n\|$.   This can be improved further  by adapting the procedure of \cite{OlversAlgorithm,Lozier_80}, which in a sense
calculates an alternative bound based on the homogeneous solutions of the difference equation  as part of the algorithm.   


\begin{figure}
  \centering
    \begin{minipage}[b]{.49\textwidth}
    \includegraphics[width=\textwidth]{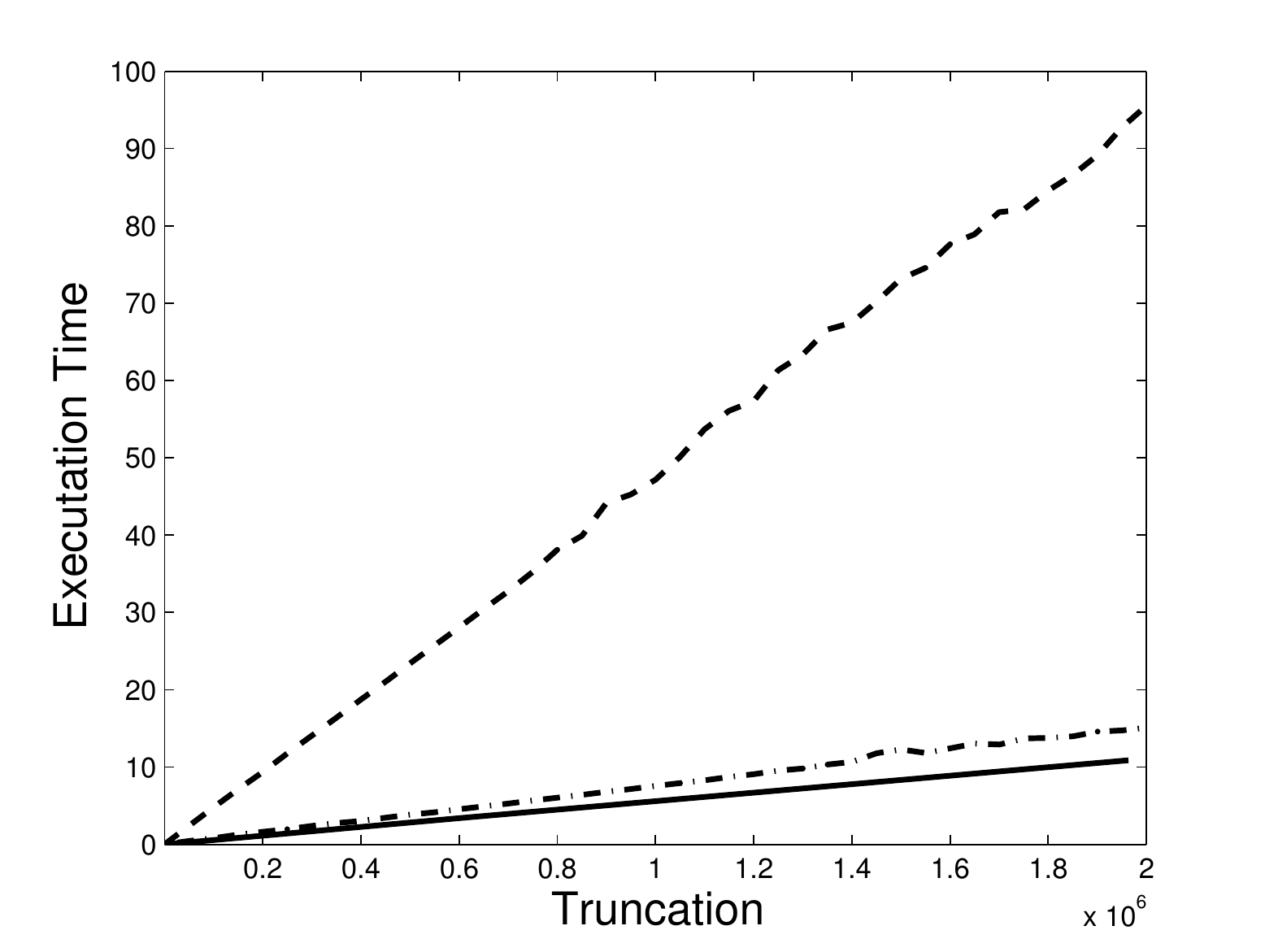}
\end{minipage}
\begin{minipage}[b]{.49\textwidth}
\includegraphics[width=\textwidth]{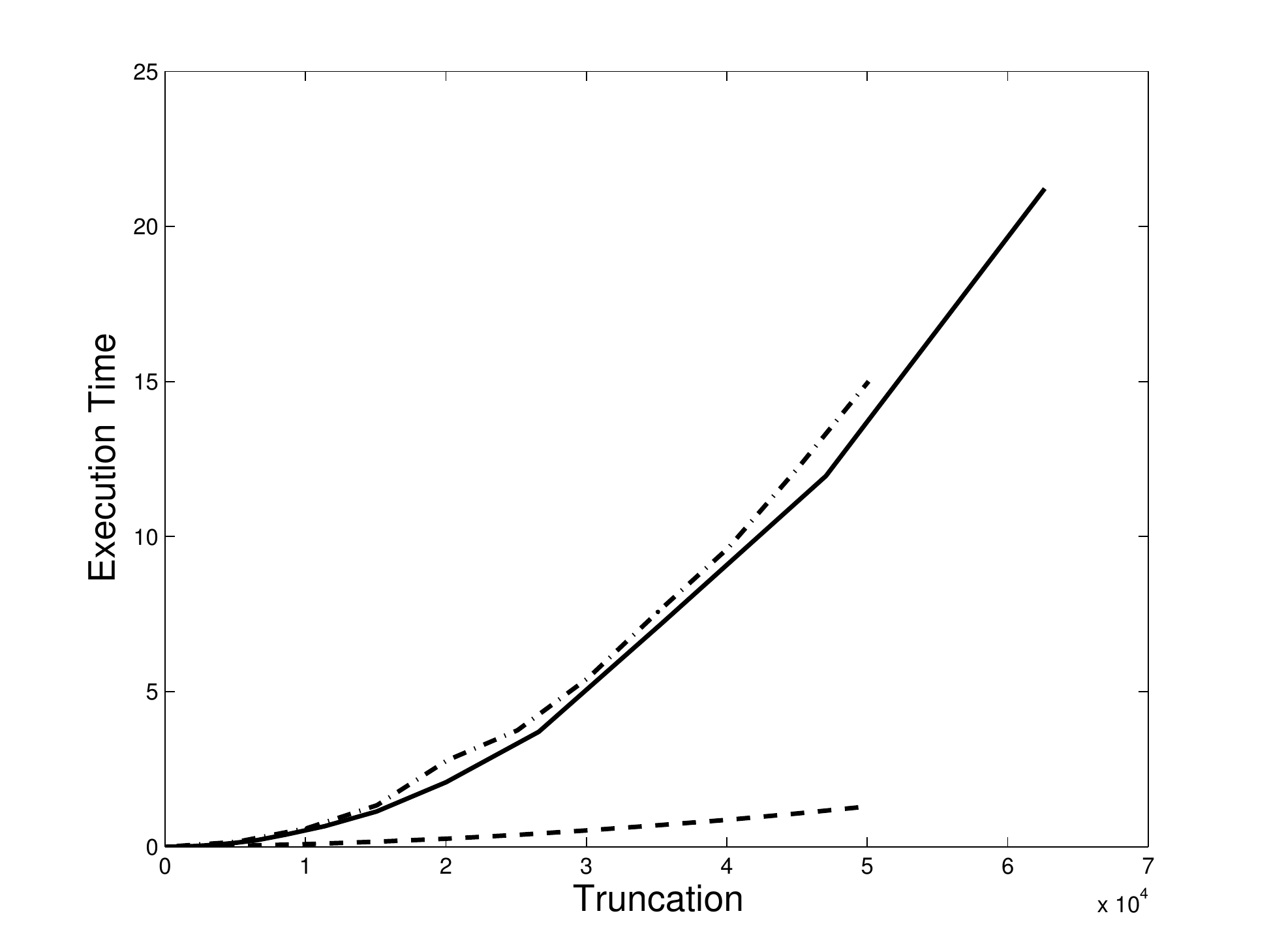}
\end{minipage}
\caption{Time versus optimal truncation, showing linear computational cost for the {\sc C++} implementation (left) and the {\sc Matlab} implementation (right). Examples are \eqref{eq:airyeqn} (solid),  \eqref{eq:smallband}  (dot-dashed) and \eqref{eq:cosband} (dashed).}
\label{fig:adaptiveqrtiming}
\end{figure}	

 In Figure~\ref{fig:adaptiveqrtiming}, we apply the adaptive QR decomposition to solve the Airy equation of example \eqref{eq:airyeqn} with $\epsilon = 1,10^{-1},10^{-2},\ldots,10^{-13}$, as well as
	\begin{align}
		u'' + (7 + 2x + 6 x^2) u &= \sum_{k=0}^\nu T_k(x) \hbox{ for } u(-1) = 1, u(1) = 1,  \label{eq:smallband}\\
		u'' + \cos x u &= \sum_{k=0}^\nu T_k(x) \hbox{ for } u(-1) = 1, u(1) = 1  \label{eq:cosband}
	\end{align}
for increasing values of $\nu$, up to 2 million.  In the last example, we replace $\cos x$ with its 13 point Chebyshev interpolating polynomial.    We plot the
number of seconds the calculation takes versus the adaptively calculated optimal truncation $n_{\rm opt}$.  
This demonstrates the $\O(n_{\rm opt})$ complexity of the algorithm, and the fact that the algorithm easily
scales to more than a million unknowns.   It also shows that, while $\O(n_{\rm opt})$ complexity is maintained, the computational cost does increase with the bandwidth of the variable coefficient.  In the Airy example \eqref{eq:airyeqn}, the time taken for $\epsilon = 10^{-13}$ is less than 11 seconds\footnote{CPU times were calculated on a 2011 iMac, with a 2.7 Ghz Intel Core i5 CPU}, with the  $n_{\rm opt}$ calculated to be approximately 2 million.  For example \eqref{eq:cosband}, a calculation resulting in  $n_{\rm opt}$ being 2 million increases the timing to 95 seconds.

	On the right of Figure~\ref{fig:adaptiveqrtiming} we plot the timing of {\sc Matlab}'s built-in sparse LU solver applied to the truncated equation, with  $n = n_{\rm opt}$ pre-specified.  Even without the added difficulty of calculating $n_{\rm opt}$,  the computational cost grows faster than $\O(n)$, prohibiting its usefulness for extremely large $n$.  

\begin{remark}
	We calculated Figure~\ref{fig:adaptiveqrtiming} using {\tt C++}.  There is a great deal of room for optimizing the implementation, as we do not use GPU, parallel or vector processing units.  
\end{remark}

\subsection{Linear algebra stability in higher order norms}

We first remark that, if $\B$ is a bounded operator from $\ell^2_1 \rightarrow
\ell^2_0$, such as Dirichlet boundary conditions, then the results of section
\ref{sec:preconditioner} prove that the \emph{preconditioned} linear system has
bounded $2$-norm condition number as $n \rightarrow \infty$.  Because the  $Q R
$
decomposition is computed using Givens rotations, which are stable in
$\ell^2_0$ \cite{Higham_02_01}, as is backward substitution, we see that the
linear algebra scheme applied to the preconditioned operator is stable, and has
$\O(m^2 n)$ complexity.  


	The results of section \ref{sec:preconditioner} also show convergence
and well-conditioning in high order norms.  One would expect numerical
round-off in $Q R$ decomposition to destroy this convergence property. 
However, in practice, this is not the case.  In Figure
\ref{fig:higherordernorms}, we solve the standard Airy equation as a two point
boundary value problem:
\[
u'' - x u = 0, \quad u(-1) = \Ai(-1)\hbox{ and } u(1) = \Ai(1).
\]
We witness convergence in higher order norms as well.  In other words,
the computed solution has a fast decaying tail, and all of the \emph{absolute}
error is in the low order coefficients.  

\begin{figure}
  \begin{minipage}[b]{.5\textwidth}
    \includegraphics[width=\textwidth]{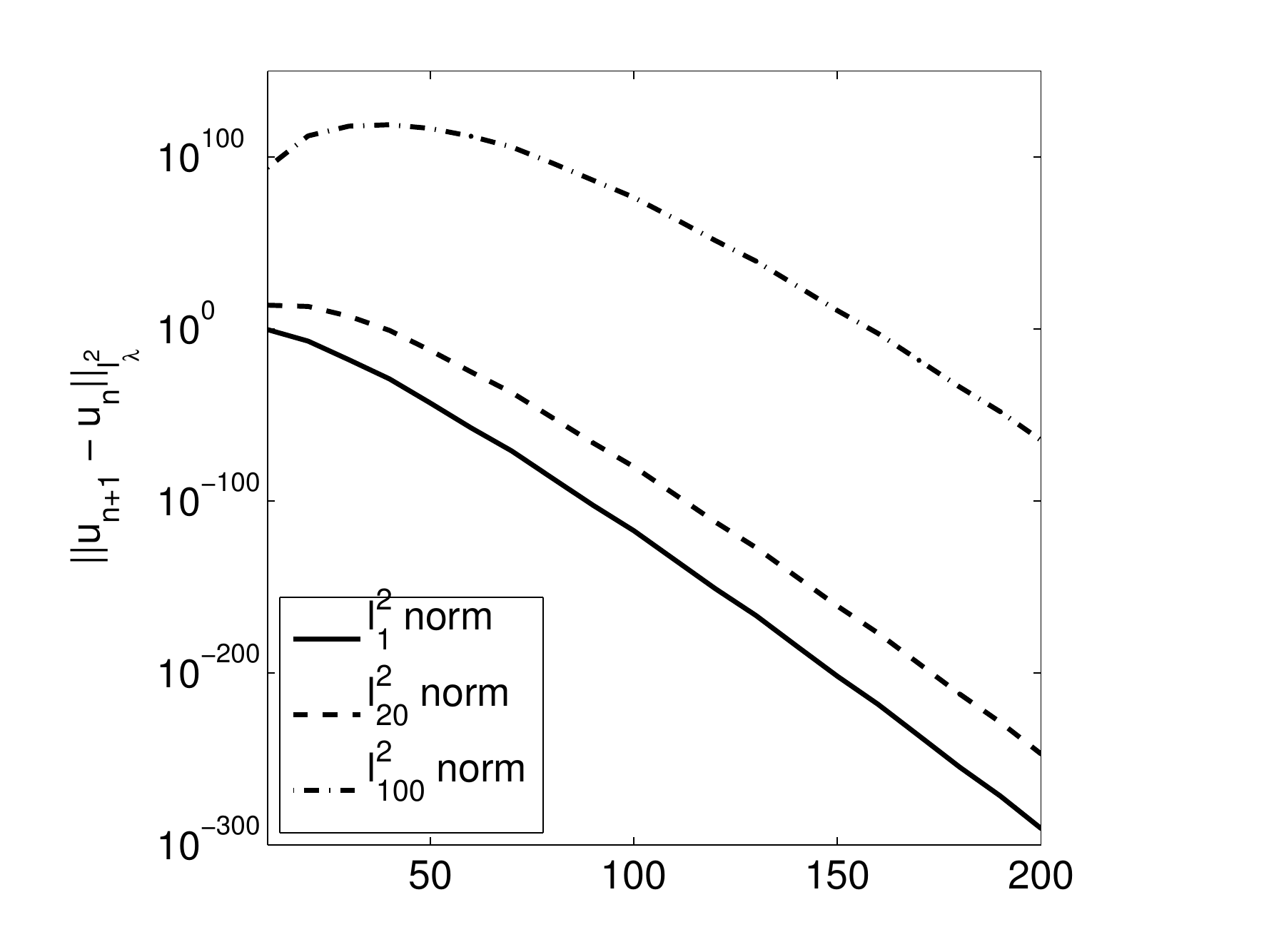}
\end{minipage}
\begin{minipage}[b]{.5\textwidth}
\includegraphics[width=\textwidth]{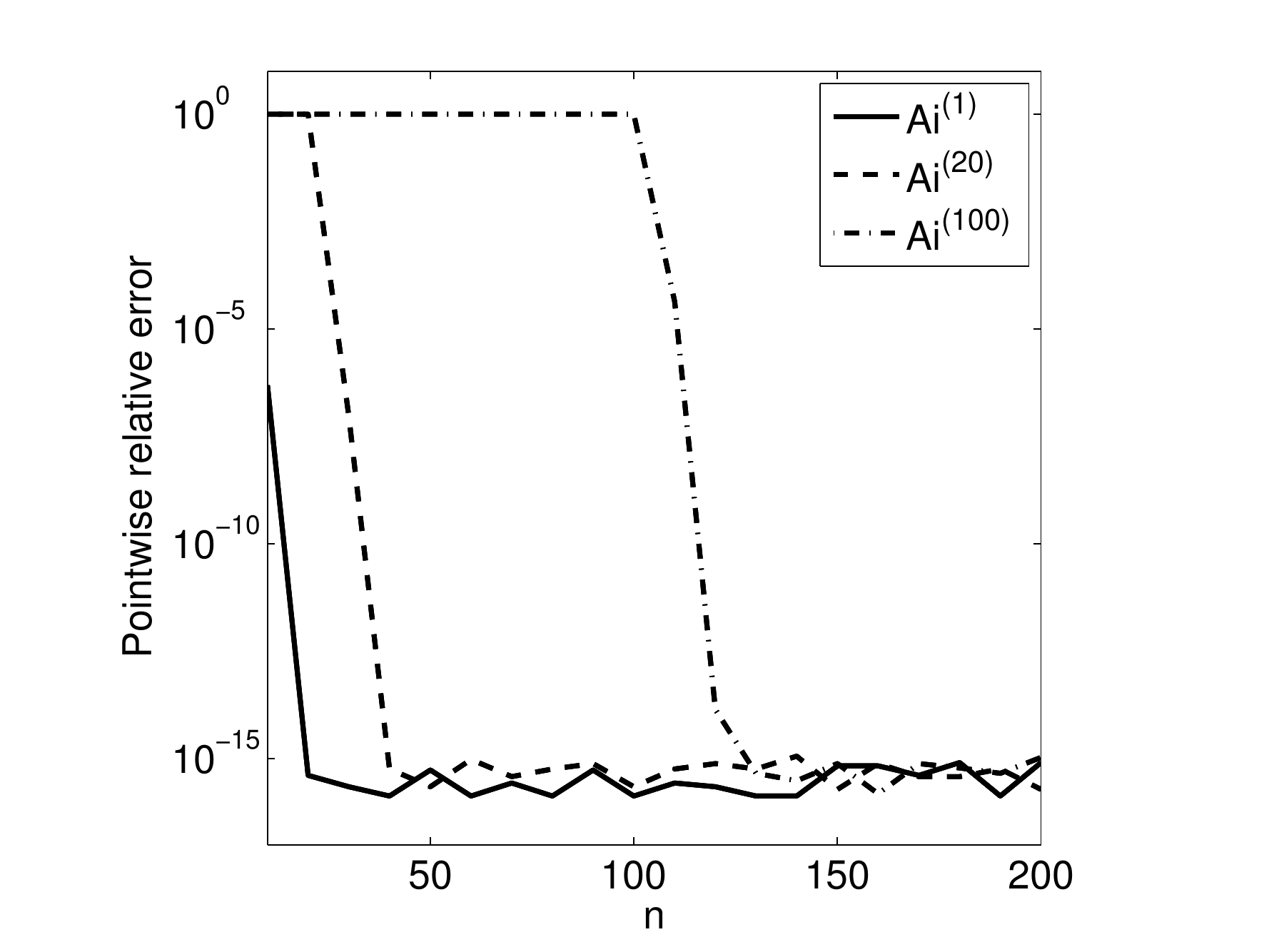}
\end{minipage}

\caption{Left: The Cauchy error of the solution coefficients measured
in the: (solid) $\ell_{1}^2$-norm, (dashed) $\ell_{20}^2$-norm, (dot-dashed)
$\ell_{100}^2$-norm. Right: The relative error in derivatives of the
solution at $x=-\frac{1}{2}$ for the $1$st (solid), $20$th (dashed), $100$th (dot-dashed) derivative. This plot shows that for sufficiently large discretisations very high derivatives of the solution can be resolved.} 
\label{fig:higherordernorms}
\end{figure}

	Convergence in higher order norms implies convergence of derivatives,
and this is verified by differentiating the computed solution by applying ${\cal
S}_0^{-1} {\cal D}_1$ repeatedly. (We note that the banded, upper triangular nature of
${\cal S}_0$ means that its inverse applied to a vector is computable in $\O(n)$
operations; after all, this corresponds to differentiating without converting bases which can be done in $\mathcal{O}(n)$ operations \cite{Mason_03_01}.)  We compare the computed derivatives with the true
derivatives of the Airy function $\Ai^{(p)}$ at a single point, $x = -1/2$, and
witness convergence for $p = 1,5$ and $20$.

	We note that the stability of the $Q R$ algorithm  in higher
order norms appears to follow from $Q$ being almost banded: it is banded along the
superdiagonal, and  decays exponentially along the subdiagonals.    In the case where the boundary conditions are such that $A$ itself is banded, exponential decay in $Q = A R^{-1}$ follows since  $R^{-1}$ has exponentially decay, due to $R$ being banded and well-conditioned \cite{Demko_84_01}.  

	
	Since $\ell^2_\lambda$ is a Hilbert space, Givens rotations can be
constructed with the relevant inner product, resulting in orthogonal operations
(i.e., with condition number one) in $\ell^2_\lambda$.  The stability of such an
algorithm in $\ell^2_\lambda$ follows immediately.  With this modification, we
have an $\O(m^2 n)$ stable algorithm which is guaranteed to converge in higher order norms.  
	
\begin{remark}
	While we have discussed the convergence in higher order norms with
Dirichlet boundary conditions, the exact same logic applies to the convergence
and stability observed with higher order boundary conditions, such as Neumann conditions.
\end{remark}

\section{Future work}

We have designed a spectral method that achieves $\O(n)$ computational cost, stability and generality for solving linear ODEs.  We determined the optimal truncation adaptively using the QR factorization.   We believe that the ideas introduced in this paper will serve as a basis for future spectral methods. 

An exciting generalization of this work will be to higher
dimensions,
where the density of matrices has inhibited the usefulness of spectral methods.   A similar approach, based on boundary recombination,    was used in
\cite{Shen_95_01} for
the Helmholtz equation.
Adapting our method to rectangular domains, using
tensor products of ultraspherical polynomials, results in a tensor product of almost banded matrices.     Constructing an adaptive QR decomposition to such matrices will be crucial for achieving  competitive computational costs, and for optimally choosing the number of unknowns needed in each dimension.  

   Using the theory of \cite{Ryland_11_01}, there are
potentially generalization of ultraspherical polynomials to deltoid domains.   
What is less clear is how the results would be generalizable to other
domains, such as the the triangle.

For problems with boundary layers, or localized oscillatory behaviour, it can be more efficient to 
subdivide the unit interval, in order to minimize the total number of unknowns.  This will be of fundamental importance for PDEs, where  solutions typically have singularities at corners.  In the 1D case, it is straightforward to incorporate subdivision by representing the operators as block matrices, with additional boundary rows to impose continuity.  Whether the adaptive QR decomposition can be easily generalized to such matrices is less clear.

	Finally, an extension of this work is to nonlinear
differential
equations, of the form
	$$\B u = \mathbf 0 \qqand \L u + g(u) = f.$$
  Our approach can be incorporated  into an
infinite-dimensional Newton iteration, {\it \'a la} \cite{Birkisson_11,Driscoll_08_01}.  The  Newton
iteration takes the form
 	$$u_{k+1} = u_k + \begin{pmatrix} \B \\ \L + g'(u_k)\end{pmatrix}^{-1}\begin{pmatrix} \mathbf 0 \\ \L u_k + g(u_k) - f \end{pmatrix}.$$
Since the linear operator that is inverted involves the solution itself,  the bandedness of multiplication is lost, at least when na\"ively implemented.  It may be possible to overcome this difficulty by using the fact that the derivative for Newton iteration need not be accurate to machine precision.  Moreover, the  decay in the coefficients of the operator can combine with decay in the solution, hence the entries of the operator can be truncated more aggressively while maintaining accuracy.  However, even with a dense representation of the operator, the
stability of the resulting algorithm is preserved in initial numerical experiments.

\section*{Acknowledgments}
	We thank P. Gonnet, discussions with whom led to the observation of
well-conditioning of coefficient methods, which initiated the research of
this
paper.  We also thank the rest of the Chebfun team, including T. A. Driscoll,
N. Hale and L. N. Trefethen for valuable feedback.  We thank J. P. Boyd and the anonymous reviewers for very useful comments and suggestions.  We finally thank D. Lozier and F. W. J. Olver for discussions and references related to Olver's algorithm.


\begin{thebibliography}{6} 


\bibitem{Baszenski_97_01} {\sc G. Baszenski and M. Tasche}, {Fast polynomial
multiplication and convolutions related to the discrete
cosine transform}, {\em Lin. Alg. Appl.}, 252 (1997), pp.~1--25. 

\bibitem{Birkisson_11} {\sc A. Birkisson and T. A. Driscoll}, Automatic Fr\'echet differentiation for the numerical solution of boundary-value problems, {\em ACM Trans. Math. Softw.}, to appear. 

\bibitem{Bender_78_01} {\sc C. M. Bender and S. A. Orszag}, {\em Advanced
Mathematical Methods for Scientists and Engineers}, McGraw--Hill, (1978).

\bibitem{Boyd_01_01} {\sc J. P. Boyd}, {\em Chebyshev and Fourier Spectral
Methods}, Dover Publications, (2001).


\bibitem{Canuto_85_01} {\sc C. Canuto and A. Quarteroni}, {Preconditioned
minimal residual methods for Chebyshev spectral calculations}, {\em Jour.
Comp. Phys.}, 60 (1985), pp.~315--337. 

\bibitem{Canuto_06_01} {\sc C. Canuto}, {\em Spectral Methods: Fundamentals in
Single Domains}, Springer, (2006). 

\bibitem{Carlitz_61_01} {\sc L. Carlitz}, {The product of two
ultraspherical polynomials}, {\em Proc. Glasgow Math. Assoc.}, 5 (1961),
pp.~76--79.

\bibitem{Chandrasekaran_03_01} {\sc S. Chandrasekaran and M. Gu}, {Fast and atable algorithms for banded plus semiseparable systems of linear equations}, 
{\em SIAM Jour. Mat. Anal. App.}, 25 (2003), pp.~373--384.

\bibitem{ClenshawTauMethod} {\sc C. W. Clenshaw}, {The numerical solution of linear differential equations in Chebyshev series}, 
{\em Proc. Camb. Philos. Soc.}, 53 (1957), pp.~134--149.

\bibitem{ClenshawCurtis} {\sc C. W. Clenshaw and A. R. Curtis}, { A method for numerical integration on an automatic computer}, 
{\em Numer. Math.}, 2 (1960), pp.~197--205.

\bibitem{Coutsias_96_01} {\sc E. A. Coutsias, T. Hagstrom and D. Torres}, {An
efficient spectral method for ordinary differential equations with rational
function coefficients}, 
{\em Math. Comp.}, 65 (1996), pp.~611--635.

\bibitem{Davis_75_01} {\sc P. J. Davis} {\em Interpolation and Approximation}, Dover, (1975).

\bibitem{Demko_84_01} {\sc S. Demko, W. F. Moss and P. W. Smith}, {Decay rates for inverse of band matrices}, {\em Math. Comp.}, 43 (1984), pp.~491--499.


\bibitem{Deville_85_01} {\sc M. Deville and E. Mund}, {Chebyshev pseudospectral
solution of second-order elliptic equations with finite element
preconditioning}, {\em Jour. Comp. Phys.}, 60 (1985),
pp.~315--337.

\bibitem{Doha_02_01} {\sc E. H. Doha and W. M. Abd-Elhameed}, { Efficient
spectral-Galerkin algorithms for the direct solutions
of second-order equations using ultraspherical polynomials}, {\em SIAM J. Sci.
Comp.}, 24 (2002), pp.~548--571.

\bibitem{Doha_04_01} {\sc E. H. Doha}, {On the construction of recurrence
relations for the expansion and connection coefficients in series of Jacobi
polynomials}, {\em J. Phys. A: Math. Gen.}, 37 (2004), pp.~657--675.

\bibitem{Doha_09_01} {\sc E. H. Doha and W. M. Abd-Elhameed}, { Efficient
spectral ultraspherical-dual-Petrov--Galerkin algorithms for the direct solution
of $(2n+1)$th-order linear differential equations}, {\em Math.
Comp. Simul.}, 79 (2009), pp.~3221--3242.

\bibitem{Driscoll_08_01} {\sc T. A. Driscoll, F. Bornemann and L. N.
Trefethen}, { The chebop system for automatic solution of differential
equations}, {\em BIT Numer. Math.}, 48 (2008), pp.~701--723.

\bibitem{Driscoll_10_01} {\sc T. A. Driscoll}, {Automatic spectral collocation for integral, integro-differential, and integrally reformulated differential equations}, {\em Jour. Comp. Phys.}, 229, 17 (2010), pp.~5980--5998.

\bibitem{Elbarbary_06_01} {\sc E. M. E. Elbarbary}, {Integration
preconditioning matrix for ultraspherical pseudospectral operators}, {\em SIAM
J. Sci. Comput.}, 48 (2006), pp.~701--723.

\bibitem{Fornberg_98_01} {\sc B. Fornberg}, {\em A Practical Guide to
Pseudospectral Methods}, Cambridge University Press, (1998). 

\bibitem{Fox_62_01} {\sc L. Fox}, {Chebyshev methods for ordinary differential equations}, {\em The Computer Journal}, 4 (1962), pp.~318--331.

\bibitem{Gottlieb_98_01} {\sc D. Gottlieb and S. A. Orzag}, {\em Numerical
Analysis of Spectral Methods: Theory and Applications}, SIAM, (1977).
 
\bibitem{Greengard_91_01} {\sc L. Greengard}, {Spectral integration and
two-point boundary value problems}, {\em SIAM J. Numer. Anal.}, 4 (1991),
pp.~1071--1080. 

\bibitem{Hansen_08} {\sc A. C. Hansen}, Infinite dimensional numerical linear algebra; theory and applications, {\em Proc. R. Soc. Lond. Ser. A}, 466 (2008) pp.~3539--3559\par

\bibitem{Hesthaven_98_01} {\sc J. S. Hesthaven} 
{ Integration Preconditioning of Pseudospectral Operators. I. Basic Linear
Operators}, {\em SIAM Jour. Numer. Anal.}, 35 (1998) pp.~1571--1593. 
\bibitem{Higham_02_01} {\sc N. J. Higham}, {\em Accuracy and Stability of
Numerical Algorithms}, SIAM, (2002).

\bibitem{Lanczos_38_01} {\sc C. Lanczos} 
{Trigonometric interpolation of empirical and analytical functions}, {\em J. Math. Phys.}, 17 (1938) pp.~123--199.


\bibitem{Lee_97_01} {\sc J.-Y. Lee and L. Greengard} 
{A fast adaptive numerical method for stiff two-point boundary value problems}, {\em SIAM Jour. Sci. Comp.}, 18 (1997) pp.~403--429.

\bibitem{Lozier_80} {\sc D. W. Lozier} 
{\em Numerical Solution of Linear Difference Equations},
NBSIR
Technical Report 80-1976, National Bureau of Standards, (1980).

\bibitem{Ma_00_01} {\sc H. Ma and W. Sun}, {A Legendre--Petrov--Galerkin
and Chebyshev collocation method for third-order differential equations}, {\em SIAM
J. Numer. Anal.}, 38 (2000), pp.~1425--1438.

\bibitem{Mason_03_01} {\sc J. C. Mason and D. C. Handscomb}, {\em Chebyshev
Polynomials}, Chapman \& Hall/CRC, (2003). 


\bibitem{OlversAlgorithm} {\sc F. W. J. Olver},  Numerical solution of second-order linear difference equations,  {\em J. Res. Nat. Bur. Standards Sect. B}, 71 (1967), pp.~111--129\par

\bibitem{NISTHandbook} {\sc F. W. J. Olver, D. W. Lozier, R. F. Boisvert and C.
W. Clark},  {\em NIST Handbook of Mathematical Functions},  Cambridge
University Press, (2010).

\bibitem{Code} {\sc S. Olver and A. Townsend}, {\tt https://github.com/dlfivefifty/Ultraspherical}.

\bibitem{Orszag_80_01} {\sc S. A. Orszag}, {Spectral methods for problems in
complex geometries}, {\em Jour.  Comp. Phys.}, 50
(1971), pp.~689--703.

\bibitem{Orszag_71_01} {\sc S. A. Orszag}, {Accurate solution of the
Orr--Sommerfeld stability equation}, {\em J. Fluid Mech.}, 37
(1980), pp.~70--92.

\bibitem{Ortiz_69_01} {\sc E. L. Ortiz}, { The tau method}, {\em SIAM J.
Numer. Anal.}, 6 (1969), pp.~480--492.



\bibitem{Ritger_00_01} {\sc P. D. Ritger and N. J. Rose}, {\em Differential
Equations with Applications}, Dover Publications, (2000).

\bibitem{Ryland_11_01} {\sc B. N. Ryland and H. Z. Munthe--Kaas}, { On
multivariate Chebyshev polynomials and spectral approximations on
triangles}, in: {\em Spectral and High Order Methods for Partial Differential
Equations}, Springer Berlin Heidelberg, (2011), pp.~19--41.

\bibitem{Shen_95_01} {\sc J. Shen}, { Efficient spectral-Galerkin method II.
Direct solvers
of second- and fourth-order equations using Chebyshev polynomials}, {\em SIAM
J.  Sci. Comput.}, 16 (1995), pp.~74--87.

\bibitem{Shen_04_01} {\sc J. Shen}, { A new dual-Petrov--Galerkin method for third
and higher odd-order differential equations: application to the KdV equation},
{\em SIAM J.  Numer. Anal.}, 41 (2004), pp.~1595--1619.

\bibitem{Shen_07_01} {\sc J. Shen and L. L. Wang}, { Legendre and Chebyshev
dual-Petrov--Galerkin methods for Hyperbolic equations},
{\em Comput. Methods Appl. Mech. Engrg.}, 196 (2007), pp.~3785--3797.

\bibitem{Shen_09_01} {\sc J. Shen, T. Tang and L. L. Wang}, {\em Spectral
Methods: Algorithms, Analysis and Applications}, Springer, (2009).


\bibitem{Trefethen_00_01} {\sc L. N. Trefethen}, {\em Spectral Methods in {\sc Matlab}},
SIAM, (2000).

\bibitem{Chebfun} {\sc L. N. Trefethen et al.},  Chebfun Version 4.2, The Chebfun Development Team, (2011), {\tt http://www.maths.ox.ac.uk/chebfun/}.

\bibitem{Viswanath_12_01} {\sc D. Viswanath}, {Spectral integration of linear boundary value problems}, (2012), preprint. 

\bibitem{FastClenshawCurtisWeights} {\sc J. Waldvogel}, {Fast construction of
the Fej\'er and Clenshaw--Curtis quadrature rules}, {\em BIT Numer.
Math.}, 46
(2006), pp.~195--202.

\bibitem{Weideman_00_01} {\sc J. A. Weideman and S. C. Reddy}, {A {\sc Matlab}
differentiation matrix suite}, {\em ACM Transactions on Mathematical Software},
26 (2000), pp.~465--519.

\bibitem{Zebib_83_01} {\sc A. Zebib}, {A Chebyshev method for the solution of
boundary value problems}, {\em Jour. Comp. Phys.},
53 (1984), pp.~443--455.
\end{thebibliography}
\end{document}